\newtheorem{lemma}{Lemma}
\newtheorem{theorem}{Theorem}
\newtheorem*{theorem*}{Theorem}
\newcommand{\1}{\mathbbm{1}}
\newcommand{\E}{\mathbb{E}}
\newcommand{\cP}{\mathcal{P}}
\newcommand{\cQ}{\mathcal{Q}}
\newcommand{\cC}{\mathcal{C}}
\newcommand{\cS}{\mathcal{S}}
\newcommand{\cW}{\mathcal{W}}
\newcommand{\cZ}{\mathcal{Z}}
\newcommand{\nn}{\nonumber}
\title{As Easy as ABC: Adaptive Binning Coincidence Test\\ for Uniformity Testing}
\author{Sudeep Salgia}
\author{Qing Zhao}
\author{Lang Tong}
\affil{School of Electrical \& Computer Engineering, Cornell University, Ithaca, NY, \emph{\{ss3827,qz16,lt35\}@cornell.edu} }
\date{}
\begin{document}

\maketitle

\begin{abstract}%
  We consider the problem of uniformity testing of Lipschitz continuous distributions with bounded support. The alternative hypothesis is a composite set of Lipschitz continuous distributions that are at least $\varepsilon$ away in $\ell_1$ distance from the uniform distribution.  We propose a sequential test that adapts to the unknown distribution under the alternative hypothesis. Referred to as the Adaptive Binning Coincidence (ABC) test, the proposed strategy adapts in two ways. First, it partitions the set of alternative distributions into layers based on their distances to the uniform distribution. It then sequentially eliminates the alternative distributions layer by layer in decreasing distance to the uniform, and subsequently takes advantage of favorable situations of a distant alternative by exiting early. Second, it adapts, across layers of the alternative distributions, the resolution level of the discretization for computing the coincidence statistic. The farther away the layer is from the uniform, the coarser the discretization is needed for eliminating/exiting this layer. It thus exits both early in the detection process and quickly by using a lower resolution to take advantage of favorable alternative distributions. The ABC test builds on a novel sequential coincidence test for discrete distributions, which is of independent interest. We establish the sample complexity of the proposed tests as well as a lower bound.
\end{abstract}

\vspace{1em}


\section{Introduction}
Consider the following composite hypothesis testing problem: Given  samples of a random variable with density function $f$, one is to determine whether $f$ is the uniform distribution $u$ over $[0,1]$ (the null hypothesis) or belongs to  $\cC(\varepsilon)$ which consists of distributions   $\varepsilon$  distance  away (in $\ell_1$) from the uniform distribution. The objective  is to minimize the sample complexity 
subject to the constraint of both the Type I and Type II errors being capped below a specified threshold $\delta \in (0, 1)$. \\

It turns out that the above hypothesis testing problem is not testable~\citep{Adamaszek2010}; no algorithm can achieve diminishing error probability as the number of samples increases.  Some restrictions on the set $\cC(\varepsilon)$ of distributions are therefore necessary. One class of distributions that are testable is the class of monotone distributions \citep{Adamaszek2010, Acharya2013}, for which a sample complexity of the order $O(1/\varepsilon^2)$ has been established.
To our best knowledge, general conditions on testability are unknown, and there are no results on uniformity testing of continuous distributions beyond the monotone case. \\

There is, however, an extensive literature on uniformity testing of discrete distributions with a support size of $m$. The problem dates back to the so-called empty-box problem first posed by~\cite{David1950} and later generalized by~\cite{Viktorova1966}. David cast the problem as throwing balls in $m$ boxes and proposed to use the number of empty boxes or the number of boxes containing exactly one ball (a.k.a the coincidence number)  as  test statistics for uniformity testing\footnote{The work by~\cite{David1950} considered the problem of testing whether samples are drawn i.i.d. from a known continuous distribution. The problem was reduced to a discrete problem by quantizing samples into $m$ bins without discussing the choice of $m$. The heuristic approaches he proposed are for solving the discrete problem.}. \\

While earlier results address the asymptotic properties of uniformity tests, contemporary perspectives focus more on the  finite sample properties in the form of sample and computation complexities. See \cite{Goldreich2017} and references therein.   A particularly interesting solution  was given by \cite{Paninski2008} where he proposes the use of `coincidence' test statistic---the number of one-ball boxes---and established a sample complexity in the form of $O(\sqrt{m}/\varepsilon^2)$ (under the restriction that $\varepsilon = \Omega(m^{-1/4})$).  Paninski also shows that the sample complexity of the coincidence test is  order optimal by providing a matching lower bound. Several other algorithms have also been proposed over the years that use different test statistics, e.g., the $\chi^2$-statistic employed in \cite{Acharya2015}. \\

All existing algorithms for uniformity testing are  batch algorithms in the sense that all samples are collected prior to performing the test, which makes it necessary to  focus on the most challenging alternative distributions (i.e., those that are at the minimum distance $\varepsilon$ away from the uniform).  While such approaches are sufficient to obtain minimax-optimal sample complexity, they would result in significantly suboptimal sample complexity for almost all instances in the class of alternate distributions.  Such suboptimality may have severe consequences in practice.  For example, when the alternative hypotheses represent anomalies in critical infrastructure, it is often more crucial to detect quickly those severe anomaly distributions far away from the normal state\footnote{In most applications, the probabilistic model of the normal state is known, which can be transformed to a uniform distribution. Anomaly detection can then be cast as uniformity testing.}, as more severe anomalies often carry more risk.  It is therefore highly desirable to have a test adaptive to the underlying alternative distribution, which motivates the sequential uniformity test considered in this work.

\subsection{Main results}

We consider uniformity testing of Lipschitz continuous (density) distributions, which is arguably more general than the class of monotone distributions studied in~\cite{Adamaszek2010} and~\cite{Acharya2013}. To our best knowledge, this problem has not been studied. 
Another theme that separates this work from existing literature on coincidence-based uniformity testing is the sequential aspect of the proposed tests that adapts to the underlying alternative distribution.  \\

Referred to as the Adaptive Binning Coincidence (ABC) test, the proposed strategy adapts to the unknown alternative distribution in two ways. First, it partitions the set of alternative distributions into layers based on their distances to the uniform distribution. It then sequentially eliminates the alternative distributions layer by layer in decreasing distance to the uniform, and subsequently takes advantage of favorable situations of a distant alternative by exiting early. Second, it adapts, across layers of the alternative distributions, the resolution level of the discretization for computing the coincidence statistic. The farther away the layer is from the uniform, the coarser the discretization is needed for eliminating/exiting this layer. It thus exits both \emph{early} in the detection process and \emph{quickly} by using a lower resolution to take advantage of favorable alternative distributions. We establish the sample complexity of the ABC test as well as a lower bound for uniformity testing of Lipschitz continuous distributions. While the sample complexity of ABC does not meet the lower bound, its adaptivity is evident from the sample complexity analysis. This is also the first test that demonstrates the testability of Lipschitz continuous distributions. Whether coincidence statistic is sufficient to achieve the optimal sample complexity is an open question for further investigation.  \\

The ABC test builds on a novel sequential coincidence test for discrete distributions, which is of independent interest. Due to the adaptivity, this sequential test  improves the sample complexity under the alternative hypothesis from $O(\sqrt{m}/\varepsilon^2)$ of Paninski's batch algorithm to $O(\gamma^{-2}\sqrt{m} \log(1/\gamma))$,
where  $\gamma$ is the  distance of the underlying alternative distribution  to the uniform and is greater than the minimum distance $\varepsilon$. This demonstrates that the sequential coincidence test adapts to the \emph{realized} distance $\gamma$ in an optimal order (up to a logarithmic factor) in terms of sample complexity. \\

\subsection{Related Work}

The problem of studying, estimating and  testing properties of an unknown distribution has an extensive literature in statistics and theoretical computer science communities. The class of problems includes problems like estimating properties like entropy, testing for structure of distributions, like monotonicity and independence~\citep{Batu2004, Canonne2015}, testing for closeness to a given distribution which includes as a special case of testing for identity and uniformity~\citep{Chan2013, Goldreich2011}. In this work, since we only deal with the problem of uniformity testing, we focus on the works relevant to this particular problem and refer the interested reader to the excellent survey by \cite{Canonne2008} which extensively covers all the topics. \\

The problem of uniformity testing is among the most widely studied problem among this class and has largely been studied for the case of discrete distributions. An interesting and optimal solution to the problem was given by~\cite{Paninski2008}. Paninski developed a test that employs the test statistic called as $K_1$, which counts the number of symbols that have been observed exactly once in the sample set, similar to the one-ball boxes approach suggested by~\cite{David1950}. Paninski proved a sample complexity of $O(\varepsilon^{-2} \sqrt{m})$ for the regime of $\varepsilon = \Omega(m^{-1/4})$ for his proposed approach and also provided a matching lower bound thereby establishing the optimality of the proposed method. Over the years several other estimators have also been proposed based on $\ell_2$ distance \citep{Batu2001, Batu2013, Goldreich2011, Chan2013} and modified $\chi^2$-test statistic \citep{Acharya2015}. Several of them guarantee order-optimal sample complexity for all values of $\varepsilon$ in contrast to the coincidence test proposed by Paninski which operates only in the sparse regime. \\

A common thread among these approaches is that they are batch algorithms. They are designed to collect all the samples together in a single batch and declare the result based on the samples in that batch. This necessitates the batch size to be large enough and tuned to the most difficult distributions in the alternative hypothesis class. In contrast to this fixed-sample-size approach, our proposed algorithm is sequential in nature. It sequentially takes samples in mini-batches and and is designed to terminate at an instant adapted to the \emph{realized} unknown distribution instead of the worst-case scenario, resulting in improved sample complexities. Furthermore, in \cite{Paninski2008} the problem was only considered in the asymptotic regime whereas we extend the scope to finite-time regime with additional analysis particularly to deal with the sequential nature of the test. \\

The literature on uniformity testing of continuous distributions is slim. As mentioned previously, it is necessary to impose a certain structure on the class of distributions being considered in order to ensure feasibility of the problem. For the case when the underlying distribution is monotone, \cite{Adamaszek2010} and \cite{Acharya2013} have proposed algorithms for the problem of uniformity testing that offer optimal sample complexities. In this work, we consider a different structure and study the problem on the class of Lipschitz continuous distributions with bounded support.

\section{Uniformity Testing of Discrete Distributions}
\label{sec:discrete}

In this section, we consider uniformity testing of discrete distributions and develop a sequential test employing the coincidence statistic. The results obtained for the discrete problem form the foundation for tackling the continuous problem in the next section.  \\

The key property of this sequential coincidence test (SCT) is that it adapts to the unknown alternative distribution in the composite set. More specifically, the sample complexity of SCT under the alternative hypothesis scales optimally with respect to the distance $\gamma$ of the realized alternative distribution to the uniform. This is  in sharp contrast to  batch algorithms whose sample complexity is determined by the worst-case alternative distribution seeing the minimum distance $\varepsilon$ to the uniform. 

\subsection{Problem Formulation}
\label{sub:discrete_prob_des}

Consider a binary hypothesis testing problem where the null hypothesis $H_0$ is the uniform distribution $u$ with a support size of $m$. Without loss of generality, we assume that the support set is $\{1,2, \dots, m\}$ denoted by $[m]$. The alternative hypothesis $H_1$ is composite: it consists of all distributions over $[m]$ whose $\ell_1$ distance to $u$ is no smaller than $\varepsilon$. More specifically, let  $\cC(\varepsilon)$ denote the composite set of alternative distributions under $H_1$, we have
\begin{align*}
    \cC(\varepsilon) = \{ q \in \cP([m]): \| q - u \|_1 > \varepsilon \},
\end{align*}
where $\cP([m])$ denotes all distributions over $[m]$, $\|q - u\|_1 = \sum_{i =1}^m |q_i - 1/m|$ is the $\ell_1$ distance between distribution $q$ and the uniform distribution $u$.  \\

For the hypothesis testing problems, i.i.d. samples are drawn from either $u$ (if $H_0$ is true) or a specific distribution in $\cC(\varepsilon)$ (if $H_1$ is true), unknown to the decision maker. The goal is to determine, based on the random samples, which hypothesis is true. The probabilities of false alarm and miss detection need to be capped below a given $\delta$ ($\delta \in (0, 1)$) for all alternative distributions in $\cC(\varepsilon)$.

\subsection{Sequential Coincidence Test}
\label{sub:sct_des}

Existing work on uniformity testing all focuses on batch methods (a.k.a. the fixed-sample-size tests). Specifically, based on the reliability constraint $\delta$ and the minimum separation $\varepsilon$ between $H_0$ and $H_1$, the number of required samples is pre-determined to ensure $\delta$-reliability in the worst case of a closest (i.e., distance $\varepsilon$) alternative.  \\

We propose a sequential test SCT that adapts to the unknown alternative. When the alternative is at a distance greater than $\varepsilon$ from $u$, the sequential test takes advantage of the favorable situation and exits towards $H_1$ with fewer samples. In particular, the sample complexity of SCT scales optimally with the \emph{realized} distance $\gamma$ rather than the minimum distance $\varepsilon$.   \\

SCT employs the coincidence statistic. For a give set of samples $\cS$, the coincidence $K_1(\cS) $ is the number of symbols in $[m]$ that appear exactly once in $\cS$. Specifically, let $n_j$ denote the number of appearances of symbol $j$ in $\cS$. Then 
\begin{align*}
    K_1(\cS) = \sum_{j = 1}^m \1 \{ n_j = 1 \},
\end{align*}
where $\1\{ \cdot \}$ is the indicator function. Let $K_1(n)$ denote the coincidence number of $n$ i.i.d. samples drawn from a given distribution~$p$. It is a random variable whose distribution is determined by $n$ and~$p$. We then introduce the constant $c_u(n)$, which is the expected value of $K_1(n)$ under the uniform distribution:
\begin{align*}
    c_u(n) = \E_u[K_1(\cS)],~~~\mbox{where } |\cS| = n,~\cS\stackrel{\mbox{\footnotesize i.i.d.}}{\sim} u.
\end{align*}

SCT proceeds in epochs. In each epoch the test takes $\Theta(\sqrt{m})$ additional samples. At the end of each epoch, based on all the samples $\cS$ collected so far,  the algorithm decides whether there is sufficient evidence to exit towards $H_1$. This decision is made by comparing the difference between $K_1(\cS)$ and $c_u(|\cS|)$ to a carefully chosen threshold. If the difference exceeds the threshold (indicating a sufficient separation between the coincidence number of the samples $\cS$ and the expected coincidence number of the uniform), the algorithm terminates and declares $H_1$. Otherwise, the algorithm enters the next epoch. In the event that the process reaches the maximum number $\Theta(\varepsilon^{-2})$ of epochs without exiting towards $H_1$, the algorithm terminates and declares $H_0$.  A pseudo code for the algorithm is given in Algorithm~\ref{alg:SCT}. \\

The sequential detection process of SCT can be visualized as peeling an onion: the core of the onion is the uniform distribution and its $\varepsilon$-neighbors; the layers represent alternative distributions at increasing distance to the uniform distribution\footnote{The epoch structure of SCT effectively quantizes the distance to $u$, hence forming a finite partition of the alternative distributions in $\cC(\varepsilon)$. More specifically, $\cC(\varepsilon)$ is partitioned into $\kappa$ layers, where $\kappa= 112\varepsilon^{-2}$ is the maximum epoch number defined in Algorithm~\ref{alg:SCT}. Each epoch peels off one layer.}. Each epoch peels one layer of the onion, either by exiting towards $H_1$ (if the realized alternative distribution resides in this layer) or by eliminating this set of alternative distributions and moving to the next layer closer to the core. If all outer layers are eliminated (i.e., all alternative distributions in $\cC(\varepsilon)$ are eliminated), SCT terminates and declares $H_0$. The ability of peeling the onion layer by layer is rooted in the fact that when the samples $\cS$ are drawn from a distribution $\gamma$-distance away from $u$, the difference in coincidence numbers $c_u(n_k) - K_1(\cS)$  scales proportionally with $\gamma^2$ . This difference hence exceeds the threshold early when $\gamma$ is large (i.e., when the alternative distribution resides farther from the core of the onion).

\subsection{Sample Complexity}
\label{sub:sct_samp_comp}

The expected sample complexity of SCT is characterized in the following theorem.

\begin{theorem}
For $m > m_0$ and $\varepsilon = \Omega(m^{-1/8})$, where $m_0 = \min \{ l: 1123 l^{1/4} e^{-0.25\sqrt{l}} \leq \delta \varepsilon^2 \}$, we have
\begin{itemize}
    \item Under $H_1$ with an alternative distribution $p$ that is $\gamma$ away from $u$, the expected sample complexity of SCT is $\displaystyle O\left(\frac{\sqrt{m}}{\gamma^2} \sqrt{ \log\left( \frac{1}{\gamma} + \frac{1}{\delta} \right)}\right) $.
    \item Under $H_0$, the expected sample complexity of SCT is $\displaystyle O\left(\frac{\sqrt{m}}{\varepsilon^2}  \sqrt{\log\left( \frac{1}{\varepsilon} + \frac{1}{\delta} \right)}\right) $.
    \item Under both $H_1$ and $H_0$, the probability of correct detection under SCT is at least $1 - \delta$. \label{thm:sct_samp_complexity}
\end{itemize}
\end{theorem}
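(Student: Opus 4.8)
\section*{Proof proposal}

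The plan is to reduce all three claims to a sharp understanding of the first two moments of the decision statistic $c_u(n) - K_1(\cS)$ under both hypotheses, after which the sequential bookkeeping is routine. Write an alternative $p = u + \Delta$ with $\Delta = (\delta_1,\dots,\delta_m)$, $\sum_j \delta_j = 0$, and $\|\Delta\|_1 = \gamma$ (taking $\gamma = \varepsilon$ for the closest alternatives on the null side). Two quantities govern everything: (i) the \emph{mean gap} $G(n) := c_u(n) - \E_p[K_1(n)]$, which I will show grows like $n^2\|\Delta\|_2^2$, and (ii) the \emph{variance} $\mathrm{Var}(K_1(n))$ under $u$ and under $p$, which I will show is $O(n^2/m)$, crucially \emph{not} $O(n)$. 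Threshold choice, the union bound over epochs, and the expected-stopping-time computation then follow by concentration.

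First I would establish the mean gap. Using the Poissonized surrogate $\E_p[K_1(n)] = \sum_j \phi(np_j)$ with $\phi(t) = te^{-t}$ (and controlling the de-Poissonization error in the stated regime $\varepsilon = \Omega(m^{-1/8})$, $m > m_0$), I Taylor-expand each term about $t_0 = n/m$. The first-order contribution carries the factor $\phi'(n/m)\, n \sum_j \delta_j = 0$, so the leading term is second order: $G(n) \approx -\tfrac12 \phi''(n/m)\, n^2 \|\Delta\|_2^2$. Since $\phi''(t) = (t-2)e^{-t} < 0$ for $t = n/m \ll 1$ (guaranteed because $n = O(\sqrt m/\varepsilon^2) \ll m$ in this regime), this gap is positive and of order $n^2\|\Delta\|_2^2$. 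By Cauchy--Schwarz, $\|\Delta\|_2^2 \ge \|\Delta\|_1^2/m = \gamma^2/m$, giving the worst-case lower bound $G(n) = \Omega(n^2\gamma^2/m)$. This is the precise form of the claim that the coincidence gap ``scales proportionally with $\gamma^2$,'' and the regularity conditions are exactly what force the quadratic term to dominate the neglected higher-order terms.

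The main obstacle is the variance bound, and it is where the $\sqrt m/\gamma^2$ rate (rather than a worse $\ell_2$-type rate) is won. Under Poissonization the indicators $\1\{N_j = 1\}$ are independent with $\mathrm{Var}(K_1) \approx n$; a naive use of this would only license detection at $n \sim m^{2/3}/\gamma^{4/3}$. The resolution is that the test runs on a \emph{fixed} sample budget, so the relevant law is multinomial, i.e.\ Poisson conditioned on $S := \sum_j N_j = n$. Since $K_1$ is strongly and nearly linearly correlated with $S$ near $S = n$, conditioning removes the $O(n)$ bulk fluctuation and leaves only the second-order (doubleton) fluctuation of order $n^2/m$. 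I would make this rigorous via the decomposition $\mathrm{Var}_{\mathrm{Poiss}}(K_1) = \E[\mathrm{Var}(K_1 \mid S)] + \mathrm{Var}(\E[K_1 \mid S])$, showing the two $O(n)$ pieces cancel so that $\mathrm{Var}(K_1 \mid S = n) = O(n^2/m)$; alternatively one bounds $\sum_{i\ne j}\mathrm{Cov}(\1\{n_i=1\},\1\{n_j=1\})$ directly under the multinomial law. The same bound holds under $p$ since $p$ lies in a layer close to $u$.

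With these two lemmas, the remaining steps are mechanical. Set the epoch-$k$ threshold $\tau_k = \Theta\big(\sqrt{\mathrm{Var}(K_1(n_k))}\cdot\sqrt{\log(k/\delta)}\big) = \Theta\big((n_k/\sqrt m)\sqrt{\log(k/\delta)}\big)$. For Type~I (under $H_0$), apply a Bernstein/Chebyshev bound at each epoch with variance $O(n_k^2/m)$ and union-bound over the $\kappa = \Theta(\varepsilon^{-2})$ epochs so the total false-alarm probability is at most $\delta$. For Type~II and the $H_1$ sample complexity, locate the first epoch $k^*$ at which the mean gap overtakes the threshold, i.e.\ $n_{k^*}^2 \gamma^2/m \gtrsim (n_{k^*}/\sqrt m)\sqrt{\log(k^*/\delta)}$; solving gives $n_{k^*} = \Theta\big(\tfrac{\sqrt m}{\gamma^2}\sqrt{\log(1/\gamma + 1/\delta)}\big)$ (self-consistently, $\log(k^*/\delta) \asymp \log(1/\gamma + 1/\delta)$), and a concentration bound shows the test exits by $k^*$ with probability $\ge 1-\delta$, settling Type~II. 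Because the per-epoch exit probability is bounded below for $k \ge k^*$, a geometric-tail argument shows the expected stopping time is dominated by $n_{k^*}$, yielding the $H_1$ bound; under $H_0$ the test runs, absent false alarm, to the maximal epoch $\kappa \sim \varepsilon^{-2}$, giving the $H_0$ bound. I expect the variance cancellation, and secondarily making the concentration uniform over epochs with the correct $\log(1/\gamma + 1/\delta)$ dependence, to be the delicate points.
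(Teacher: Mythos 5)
Your skeleton matches the paper's statistical mechanism: a mean gap of order $n^2\gamma^2/m$, a noise scale of $n/\sqrt{m}$, threshold crossing at $n \sim \sqrt{m}/\gamma^2$, a union bound over $\Theta(\varepsilon^{-2})$ epochs for false alarm, and a geometric-tail argument for the expected stopping time under $H_1$; your identification of the multinomial-vs-Poisson variance cancellation is also correct. However, there is a genuine gap at the concentration step. You treat the passage from the variance bound $\mathrm{Var}(K_1(n)) = O(n^2/m)$ to per-epoch tail bounds as routine ``Bernstein/Chebyshev.'' It is not. Under the multinomial law $K_1$ is a sum of \emph{dependent} indicators, so Bernstein's inequality does not apply off the shelf; bounded differences (McDiarmid) gives deviations at scale $\sqrt{n}$, which in the sparse regime $n \ll m$ is \emph{larger} than the required scale $n/\sqrt{m}$, hence useless. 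Chebyshev with variance $O(n_k^2/m)$ does give per-epoch bounds, but to make the errors summable to $\delta$ over $\kappa = \Theta(\varepsilon^{-2})$ epochs you need per-epoch error roughly $\delta/k^2$, which forces $\tau_k \sim (n_k/\sqrt{m})\, k/\sqrt{\delta}$ --- a polynomial, not logarithmic, inflation in $k$ and $1/\delta$. Re-solving the crossing condition with such a threshold (and any sample schedule not tuned to the unknown $\gamma$) degrades the rate to something like $\sqrt{m}/(\gamma^4\sqrt{\delta})$, destroying exactly the two features claimed in the theorem: the $\sqrt{\log(1/\gamma + 1/\delta)}$ factor and the clean $\sqrt{m}/\gamma^2$ adaptive rate.

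The entire technical content of the paper's proof is devoted to closing precisely this gap: it establishes a sub-Gaussian-type bound on the moment generating function $\E_p\left[\exp\left(\theta(K_1(n)-n)\right)\right]$ for $|\theta| \le 0.4$ with variance proxy of order $n^2/m$, by representing the MGF as a contour integral (following Huang's formula) and estimating it by the saddle-point method: the contour is passed through an approximate saddle $\lambda_0$, the integrand is shown to be dominated by a Gaussian bump on $[-\pi/2,\pi/2]$, and the remaining arcs contribute only exponentially negligible terms. Only with this exponential control can the threshold be set at $\tau_k = \Theta\bigl(n_k\sqrt{\log(k+2/\delta)/m}\bigr)$, after which the Chernoff bound, the union bound over epochs, and the stopping-time computation proceed exactly as you outline. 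So your plan is sound up to the point where you would need to prove exponential concentration of $K_1$ with the correct (small) variance proxy despite the dependence of its summands; that lemma is the real content of the theorem, and your proposal asserts it rather than proves it.
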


\begin{proof}
See Appendix~\ref{proof:theorem_sct_samp_complexity}.
\end{proof}

As evident from the above theorem, the sample complexity of SCT under $H_1$ adapts to the distance $\gamma$ of the alternative distribution $p$ to the uniform distribution. Since $p \in \cC(\varepsilon)$, we have $\gamma > \varepsilon$, implying that the sample complexity is smaller than the fixed-sample-size approaches which offer a sample complexity of $O(\varepsilon^{-2}\sqrt{m})$.  Moreover, the adaptivity of SCT to the realized distance $\gamma$ is order-optimal (up to a logarithmic factor). This can be shown by noting that even with the knowledge of $\gamma$, the lower bound given by Paninski dictates  $\Omega(\gamma^{-2}\sqrt{m})$ samples to ensure a constant probability of reliability.  \\

\begin{algorithm}
	\caption{Sequential Coincidence Test (SCT)}
	\label{alg:SCT}
	\begin{algorithmic}
		\STATE {\bfseries Input:} $m, \varepsilon$,  $\delta \in (0,1)$ 
		\STATE Set $k \leftarrow 1$, $t \leftarrow 0$, $\kappa = 112\varepsilon^{-2}$, $\cS = \{\}$
	    \WHILE{$k \leq  \kappa$ }
	    \STATE $n_k \leftarrow \left\lceil k \sqrt{m \log(k+2/\delta)} \right\rceil$
	    \STATE $\displaystyle \tau_k \leftarrow 7n_k\sqrt{\frac{\log(k+2/\delta)}{m}}$
	    \REPEAT
	    \STATE Obtain a sample $X_t$
	    \STATE $\cS \leftarrow \cS \cup X_t$
	    \STATE $t \leftarrow t + 1$
	    \UNTIL{$t == n_k$}
	    \IF{$Z_k := c_u(n_k) - K_1(\cS) > \tau_k$}
	    \STATE Output $\leftarrow H_1$
	    \STATE \textbf{break}
	    \ENDIF
	    \STATE $k \leftarrow k + 1$
	    \ENDWHILE
	    \IF{$k > \kappa$}
	    \STATE Output $\leftarrow H_0$
	    \ENDIF
	    \RETURN Output
	\end{algorithmic}
\end{algorithm}

Furthermore, in addition to the near-optimal scaling with $\gamma$, SCT offers better scaling of sample complexity with respect to $\delta$, the error probability, as compared to the batch algorithms. In particular, the batch algorithms are designed to guarantee a certain constant probability of error, and the common technique to extend such tests to guarantee an arbitrary probability of error $\delta$ is to repeat the test sufficiently many times so that the result declared by the majority vote meets the confidence requirements. Such an approach results in a $\log(1/\delta)$ dependence of sample complexity on $\delta$ as opposed to the $\sqrt{\log(1/\delta)}$ offered by SCT. Thus, the refined analysis required to analyze the sequential coincidence test not only demonstrates  adaptivity to the underlying distribution but also results in improved dependence on $\delta$.  \\

In addition to the dependence on $\gamma$ and $\delta$, we would also like to briefly mention the regime of input parameters $m$ and $\varepsilon$ for which this result holds. The lower bound $m_0$ is required to ensure the support size is large enough to ensure a confidence of $\delta$ in the sparse regime. Moreover, the regime of $\varepsilon$ for which this result is applicable can also in part be attributed to the fact that the coincidence statistic works well only in the sparse regime. We believe that the $\varepsilon = \Omega(m^{-1/8})$ requirement as opposed to the standard requirement of $\varepsilon = \Omega(m^{-1/4})$ for sparse regime is merely an analysis artifact and can be improved using better techniques for bounding the moment generating function of $K_1$.

\section{Uniformity Testing of Continuous Distributions}

\subsection{Problem Formulation}
We now consider uniformity testing of continuous distributions. As shown by~\cite{Adamaszek2010}, the problem is ill posed unless certain regularity conditions are imposed on the set of continuous distributions. In this work, we consider Lipschitz continuous distributions with bounded support.
Specifically, the null hypothesis $H_0$ is the uniform distribution $u$  over $[0,1]$. The alternative composite hypothesis $H_1$ is the set of $L$-Lipschitz distributions whose $\ell_1$ distance to $u$ is no smaller than $\varepsilon$. Let $\cP([0,1];L)$ denote the set of distributions over $[0,1]$ that are absolutely continuous with the Lebesgue measure on $[0,1]$ and whose density functions are $L$-Lipschitz. Specifically, for all distributions $q \in \cP([0,1];L)$, the density functions  $f_q(x)$  satisfies, for all $x, y \in [0,1]$,
\begin{align*}
    |f_q(x) - f_q(y)| \leq L |x - y|.
\end{align*}
The composite set of alternative distributions under $H_1$ is given by
\begin{align*}
    \cC_L(\varepsilon) = \left\{ q \in \cP([0,1]; L) :  \| q - u \|_1 = \int_0^1 |f_q(x) - 1| \ \mathrm{d}x > \varepsilon \right\}.
\end{align*}
The objective of the uniformity testing is the same as in the discrete problem: to determine, with a reliability constraint of $\delta$, whether the observed random samples are generated from $u$ ($H_0$) or from a distribution in $\cC_L(\varepsilon)$ ($H_1$).

\subsection{Adaptive Binning Coincidence Test}

We now generalize SCT to the continuous problem specified above. Our goal is to preserve the adaptivity of SCT to the underlying alternative distribution under $H_1$. \\

The relation among the set of continuous distributions can still be visualized as an onion: $u$ and its $\varepsilon$-neighbors form the core, and alternative distributions in $\cC_L(\varepsilon)$ form layers according to their distances to $u$. The algorithm still aims to determine, sequentially over epochs, which layer the underlying distribution of the observed samples resides, starting from the outer-most and moving towards the core. The key question in the continuous problem is how to infer, from a coincidence type of statistic, whether the underlying distribution resides in the current layer. A straightforward answer to this question is discretization: a uniform binning of the support set $[0,1]$ with coincidence number defined with respect to the bin labels of the random samples. Much less obvious is the choice of the resolution level for the discretization, i.e., how finely to bin the continuum domain. A key rationale behind the proposed ABC (Adaptive Binning Coincidence) test is that the farther away the layer is from the core, the coarser the discretization is needed for inferring whether the underlying distribution resides in this layer. More specifically, not only the test can exit early when the realized distance $\gamma$ is favorable, but also the number of required samples for making the peeling decision is fewer due to a coarser discretization. In other words, ABC adapts to the unknown realized distance $\gamma$ by exiting both \emph{early} in the detection process and \emph{quickly} by using a lower resolution to take advantage of favorable alternative distributions.  \\

\begin{algorithm}
	\caption{Adaptive Binning Coincidence (ABC) Test}
	\label{alg:ABC}
	\begin{algorithmic}
		\STATE {\bfseries Input:} $\varepsilon, L$,  $\delta \in (0,1)$ 
		\STATE Set $k \leftarrow 1$, $t \leftarrow 0$, $\kappa \leftarrow 576\varepsilon^{-2}$, $\cS \leftarrow \{\}$
	    \WHILE{$k \leq  \kappa$ }
	    \STATE $m_k  \leftarrow \lceil c_{0} k^4 \log(k+2/\delta) \rceil$
	    \STATE $n_k \leftarrow \lceil \sqrt{c_{0}} k^3 \log(k+2/\delta) \rceil$
	    \STATE $\displaystyle \tau_k \leftarrow 9n_k\sqrt{\frac{\log(k+2/\delta)}{m_k}}$
	    \REPEAT
	    \STATE Obtain a sample $X_t$ 
	    \STATE $\cS \leftarrow \cS \cup X_t$
	    \STATE $t \leftarrow t + 1$
	    \UNTIL{$t == n_k$}
	    \IF{$Z_k := c_u(n_k; m_k) - K_1(\cS; m_k) > \tau_k$}
	    \STATE Output $\leftarrow H_1$
	    \STATE \textbf{break}
	    \ENDIF
	    \STATE $k \leftarrow k + 1$
	    \ENDWHILE
	    \IF{$k > \kappa$}
	    \STATE Output $\leftarrow H_0$
	    \ENDIF
	    \RETURN Output
	\end{algorithmic}
\end{algorithm}

We can now describe the ABC test, which proceeds in a similar epoch structure as SCT with two key modifications. First, the resolution of the discretization increases at a carefully chosen rate over epochs, and the number of samples taken in each epoch is adjusted accordingly. Specifically, let $m_k$ denote the number of discretization bins in epoch $k$, where $m_k$ increases at the rate of $k^4\log k$. The number of samples taken in epoch $k$ is $\Omega(\sqrt{m_k})$, which retains the same squared-root relation to the effective support size $m_k$ as in the discrete case. Second, the coincidence number $K_1(\cS)$ in each epoch is computed by rebinning all observed samples (including those obtained in previous epochs) based on the refined discretization $m_k$ in the current epoch.   A detailed description of the algorithm is given in Algorithm~\ref{alg:ABC}, where $K_1(\cS; m)$ denotes the coincidence number computed over the set $\cS$ of samples when the interval is uniformly divided into $m$ bins. Similarly, $c_u(n; m)$ denotes the expected coincidence number of $n$ samples from the uniform distribution with a support of $m$ bins. The constant\footnote{The constant $28212$ as a lower bound for $c_0$ arises from the conditions imposed during analysis. Please refer to Appendix~\ref{proof:theorem_sct_samp_complexity} for exact expressions for these conditions. We would also like to point out that the constants are not optimized. The analysis focuses on the order.} $c_{0} \geq \max\{ 28212 , m_0, 2L\}$, where $m_0$ is  defined in~Theorem~\ref{thm:sct_samp_complexity}. \\

\subsection{Sample Complexity}

The theorem below establishes the sample complexity of the ABC test and its adaptivity to the realized distance $\gamma$ under $H_1$.

\begin{theorem}
\begin{itemize}
    \item Under $H_1$ with an alternative distribution $p$ that is $\gamma$ away from $u$, the expected sample complexity of ABC is $\displaystyle O\left(\frac{1}{\gamma^6}  \log\left( \frac{1}{\gamma} + \frac{1}{\delta} \right)\right) $.
    \item Under $H_0$, the expected sample complexity of ABC is $\displaystyle O\left(\frac{1}{\varepsilon^6}  \log\left( \frac{1}{\varepsilon} + \frac{1}{\delta} \right)\right) $.
    \item Under both $H_1$ and $H_0$, the probability of correct detection under ABC is at least $1 - \delta$. \label{thm:abc_samp_complexity}
\end{itemize}
\end{theorem}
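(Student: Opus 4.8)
The plan is to reduce the continuous problem, epoch by epoch, to an instance of the discrete test analyzed in Theorem~\ref{thm:sct_samp_complexity} with the fixed support size $m$ replaced by the epoch-dependent resolution $m_k$, and then to control the extra error that discretization of a continuous density introduces. The one genuinely new ingredient is a \textbf{discretization lemma}. Writing $g = f_p - 1$, so that $\int_0^1 g = 0$, $\int_0^1 |g| = \gamma$, and $g$ is $L$-Lipschitz, let $p^{(m)}$ be the law of a sample rebinned into $m$ equal bins $B_1,\dots,B_m$ and let $\gamma_m = \|p^{(m)} - u\|_1 = \sum_i |\int_{B_i} g|$. The only loss relative to $\gamma = \sum_i \int_{B_i}|g|$ comes from bins on which $g$ changes sign; such a bin $B_i$ (of width $1/m$) contains a zero $x_0$ of $g$, so $|g(x)| \le L|x-x_0| \le L/m$ and $\int_{B_i}|g| \le L/m^2$. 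Summing over the at most $m$ such bins gives
\begin{align*}
\gamma_m \ge \gamma - \frac{L}{m}.
\end{align*}
In particular $\gamma_{m_k} \ge \gamma/2$ as soon as $m_k \ge 2L/\gamma$, which (since $c_0 \ge 2L$) holds once $k \gtrsim \gamma^{-1/4}$.

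Next I would observe that \textbf{rebinning preserves the i.i.d. structure}: at epoch $k$ the full accumulated sample set $\cS$ (of size $n_k$), rebinned at resolution $m_k$, consists of $n_k$ i.i.d. draws from the discrete distribution $p^{(m_k)}$ over $m_k$ symbols at distance $\gamma_{m_k}$ from the discrete uniform. Moreover the ABC schedule satisfies $n_k \asymp k\sqrt{m_k \log(k+2/\delta)}$ and $\tau_k = 9 n_k\sqrt{\log(k+2/\delta)/m_k}$, which are exactly the SCT sample-size and threshold relations with $m = m_k$ (the constant is enlarged from $7$ to $9$ precisely to absorb the factor-$2$ discretization slack). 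Consequently the per-epoch estimates from the proof of Theorem~\ref{thm:sct_samp_complexity} apply with $m=m_k$: under $H_0$, $\E[Z_k]=0$ and $Z_k$ stays below $\tau_k$ with failure probability summable to $\delta$ over $k\le\kappa$ (giving correct detection probability $\ge 1-\delta$ by a union bound); under $H_1$, the Paninski-type mean gap gives $\E[Z_k]\gtrsim (n_k^2/m_k)\,\gamma_{m_k}^2 = k^2\gamma_{m_k}^2\log(k+2/\delta)$, while $\tau_k \asymp k\log(k+2/\delta)$.

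I would then \textbf{locate the exit epoch}. Combining the last two estimates with $\gamma_{m_k}\ge\gamma/2$, the gap obeys $\E[Z_k]-\tau_k \gtrsim k\big(\tfrac14 k\gamma^2 - 9\big)\log(k+2/\delta)$, which overwhelms the concentration width once $k \ge C/\gamma^2$. The binding requirement is $k\gtrsim\gamma^{-2}$ (the resolution requirement $k\gtrsim\gamma^{-1/4}$ is automatically met), so with probability $\ge 1-\delta$ the test exits towards $H_1$ by epoch $k^\star = O(\gamma^{-2})$, and $k^\star \le \kappa$ since $\gamma>\varepsilon$ and $\kappa = 576\varepsilon^{-2}$. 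Because samples accumulate, the sample count at exit is $n_{k^\star} \asymp \sqrt{c_0}\,(k^\star)^3\log(k^\star+2/\delta) = O(\gamma^{-6}\log(1/\gamma + 1/\delta))$. Under $H_0$ the test runs through all $\kappa$ epochs in the worst case, so the expected complexity is $\le n_\kappa = O(\varepsilon^{-6}\log(1/\varepsilon+1/\delta))$.

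The main obstacle is the per-epoch tail bound on the coincidence statistic in the sparse regime (the moment-generating-function control underlying Theorem~\ref{thm:sct_samp_complexity}), which must stay valid simultaneously at \emph{every} epoch as the resolution $m_k$ grows. The reason the schedule is chosen as $m_k=\Theta(k^4\log(k+2/\delta))$ is exactly to keep each epoch at the boundary of the sparse regime: the distance detectable at epoch $k$ is $\asymp k^{-1/2}$, which matches $m_k^{-1/8}\asymp k^{-1/2}$ up to logarithmic factors, so the requirement $\varepsilon=\Omega(m^{-1/8})$ of Theorem~\ref{thm:sct_samp_complexity} holds at each epoch (and $m_k\ge m_0$ follows from $c_0\ge m_0$ since $\log(k+2/\delta)>1$). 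Verifying this regime condition uniformly in $k$, checking $n_k/m_k\to 0$, and confirming that the factor-$2$ discretization loss is cleanly absorbed by the enlarged threshold constant are the steps requiring care; everything else follows the discrete analysis.
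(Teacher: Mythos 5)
Your route is the same as the paper's: a discretization lemma showing the binned distance satisfies $\gamma_m \geq \gamma - L/m$, followed by a per-epoch reduction to the discrete SCT analysis with support size $m_k$, identification of an exit epoch $k^\star \asymp \gamma^{-2}$, and reading off $n_{k^\star} = O(\gamma^{-6}\log(1/\gamma+1/\delta))$. The paper proves the identical lemma (via the mean value theorem plus Lipschitz continuity; your sign-change counting argument is a clean alternative giving the same bound), and your schedule checks --- $n_k \asymp k\sqrt{m_k \log(k+2/\delta)}$, $\tau_k \asymp k\log(k+2/\delta)$, the sparse-regime condition $m_k^{-1/8}\asymp k^{-1/2}$, and $n_k/m_k \to 0$ --- all correspond to conditions the paper verifies. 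The false-alarm bound and the $H_0$ complexity bound $n_\kappa$ also match.

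There is, however, a genuine gap in your $H_1$ \emph{expected} sample complexity argument. You establish that with probability at least $1-\delta$ the test exits by epoch $k^\star = O(\gamma^{-2})$, and conclude that the sample count is $n_{k^\star}$. But the strongest bound that a single high-probability statement yields is $\E[\Gamma\,|\,H_1] \leq (1-\delta)\,n_{k^\star} + \delta\, n_{\kappa}$, and the second term is $\delta \cdot \Theta(\varepsilon^{-6}\log(1/\varepsilon+1/\delta))$, which is \emph{not} $O(\gamma^{-6}\log(1/\gamma+1/\delta))$ when $\delta$ is a constant and $\varepsilon \ll \gamma$. To control the expectation you must use the fact that the per-epoch miss probability keeps decaying polynomially for \emph{every} $k \geq k^\star$ (in the paper, $P_e(n_k) \lesssim (k+2/\delta)^{-4.5}$), and bound
\begin{align*}
\E[\Gamma\,|\,H_1] \;\leq\; n_{k^\star} + \sum_{k \geq k^\star} n_{k+1}\, P_e(n_k),
\qquad
n_{k+1} P_e(n_k) \;\lesssim\; \frac{\sqrt{c_0}\,\log(k+2/\delta)}{(k+2/\delta)^{1.5}},
\end{align*}
so that the tail sum converges and is dominated by $n_{k^\star}$. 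This is exactly how the paper closes the argument. Your proposal has the needed per-epoch tail bounds available (they come out of the discrete MGF analysis you invoke), but never deploys them in this sum, so the expectation claim does not follow as written.
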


\begin{proof}
See Appendix~\ref{proof:theorem_abc_samp_complexity}.
\end{proof}

\subsection{Lower Bounds}

We now establish a lower bound on sample complexity for uniformity testing of Lipschitz continuous distributions. A commonly used approach to establishing a lower bound is to construct a set of distributions such that a randomly chosen distribution from the set is difficult to distinguish from the uniform distribution. More specifically, we construct  a class $\cQ$ of distributions of size $2^{N_{\varepsilon}}$ where $N_{\varepsilon} = L/2C\varepsilon$, for some constant $C > 0$. The class $\cQ$ is defined using a bijection from $2^{N_{\varepsilon}}$ bit vectors in $\{ -1, +1\}^{N_{\varepsilon}}$. Specifically, if $z = [z_0, z_1, \dots, z_{N_{\varepsilon}-1}] \in \{ -1, +1\}^{N_{\varepsilon}}$, then the density function $f_q$ of $q \in \cQ$ corresponding to $z$ is given by
\begin{align*}
    f_q(x) = \sum_{j = 0}^{N_{\varepsilon}-1} \left[f_{z_j} (x - 2Cj\varepsilon/L) + f_{-z_j} (x - C(2j+1)\varepsilon/L)\right],
\end{align*}
where the function $f_w(x)$ is given as
\begin{align*}
    f_w(x) & = \begin{cases} 1 + wLx & \text{ if } 0 \leq x \leq C\varepsilon/2L \\ 1 + w(C\varepsilon - Lx) & \text{ if } C\varepsilon/2L \leq x \leq C\varepsilon/L \\ 0 & \text{ otherwise. } \end{cases}
\end{align*}
It is not difficult to see that $f_q$ is a $L$-Lipschitz function for all $z$. We also show that, for an appropriate choice of the constant $C$, the $\ell_1$ distance of any such distribution $q$ is greater than $\varepsilon$ implying that all of them belong to $\cC_L(\varepsilon)$. Then by using LeCam's Lemma~\citep{LeCam1986}, we can show that for fewer than $\Omega(\varepsilon^{-2.5})$ samples, the probability of not being able to identify a uniformly chosen distribution from $\cQ$ from the uniform distribution is at least $0.1$, yielding us the required lower bound. This is formalized in the following theorem. \\

\begin{theorem}
Consider the problem of uniformity testing where the alternative distributions belong in $\cC_L(\varepsilon)$. For all tests guaranteeing that the probabilities of false alarm and miss detection are capped below $0.1$, their sample complexity is $\Omega(\varepsilon^{-2.5})$.
\label{thm:lower_bound}
\end{theorem}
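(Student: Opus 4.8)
The plan is to establish the lower bound $\Omega(\varepsilon^{-2.5})$ via LeCam's two-point (or rather, two-hypothesis mixture) method, using the prescribed construction of $\cQ$. The overall strategy is to reduce the composite testing problem to a binary testing problem between $H_0 = u$ and a uniform mixture $\bar{q} = 2^{-N_{\varepsilon}} \sum_{z} q_z$ over the class $\cQ$. By LeCam's Lemma, any test distinguishing $u$ from a uniformly random member of $\cQ$ must err with probability at least $\frac{1}{2}(1 - \|u^{\otimes n} - \bar{q}^{\otimes n}\|_{\mathrm{TV}})$, where $n$ is the sample size. Thus it suffices to show that if $n = o(\varepsilon^{-2.5})$, then the total variation distance $\|u^{\otimes n} - \bar{q}^{\otimes n}\|_{\mathrm{TV}}$ stays bounded away from $1$ (say below $0.8$), which forces the error probability above $0.1$.

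The first step is to verify the construction is well-posed: I would check that each $f_q$ is a valid $L$-Lipschitz density (nonnegativity, integration to one, and the Lipschitz bound across the $N_\varepsilon$ ``bumps''), and confirm that the perturbations $f_{z_j}$ and $f_{-z_j}$ are supported on disjoint intervals of width $C\varepsilon/L$ each, tiling $[0,1]$ since $N_\varepsilon = L/(2C\varepsilon)$ gives $2N_\varepsilon$ intervals of total length $2N_\varepsilon \cdot C\varepsilon/L = 1$. Next I would compute the $\ell_1$ distance of each $q \in \cQ$ to $u$: each triangular bump contributes $\int |f_w(x) - 1|\,\mathrm{d}x$, and summing over all $2N_\varepsilon$ bumps should yield a quantity proportional to $C\varepsilon$ times a constant, so choosing $C$ suitably large (but with $N_\varepsilon$ still a positive integer) guarantees $\|q - u\|_1 > \varepsilon$, placing all of $\cQ$ inside $\cC_L(\varepsilon)$.

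The analytic heart of the argument is bounding the total variation distance via the standard chain $\|u^{\otimes n} - \bar{q}^{\otimes n}\|_{\mathrm{TV}}^2 \le \frac{1}{4}\chi^2(\bar{q}^{\otimes n} \,\|\, u^{\otimes n})$, and then expanding the $\chi^2$-divergence of the mixture. The key identity is that, by the product/tensor structure and the averaging over the bit vectors $z$, the $\chi^2$-divergence factorizes into a sum over pairs $(z, z')$ of cross-correlation terms of the form $\E_{z,z'}\prod_{\text{bins}}(1 + \langle \text{perturbation}_z, \text{perturbation}_{z'}\rangle)^{\text{(count)}}$. Because the sign bits $z_j$ are independent and the bump perturbations integrate to zero against the uniform density, odd moments vanish, and the dominant surviving contribution is governed by the second moment of the per-bump perturbation scaled by $n$. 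Concretely, I expect each bump's $L_2$-energy $\int (f_w - 1)^2\,\mathrm{d}x$ to scale like $\varepsilon^3/L$ (the triangular profile has height $O(C\varepsilon)$ over a base of width $O(\varepsilon/L)$, giving an $L_2^2$-mass of order $\varepsilon^2 \cdot \varepsilon/L = \varepsilon^3/L$), and the number of ways samples can ``coincide'' within a bump contributes the combinatorial $n^2$-type factor. Balancing $n^2 \cdot (\text{per-bump energy})^2 \cdot N_\varepsilon$ against a constant yields the threshold $n = \Theta(\varepsilon^{-2.5})$.

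The main obstacle is the careful bookkeeping in the $\chi^2$ expansion of the mixture: unlike a simple two-point bound, the mixture over $2^{N_\varepsilon}$ hypotheses requires controlling the moment generating function of the random sign contributions and showing that only the diagonal-in-expectation (variance) terms survive at leading order while higher-order cross terms are negligible for $n = o(\varepsilon^{-2.5})$. I would isolate this by writing $\bar{q}(x)/u(x) = 1 + \delta(x)$ where $\delta$ is the averaged perturbation ratio, expanding $\prod_i (1 + \delta(X_i))$ and taking expectations, exploiting the independence of the $z_j$ across bumps to decouple the bins. The delicate point is confirming that the exponent in the resulting bound, after summing the geometric-like series over sample-coincidence patterns, scales as $n^2 \varepsilon^{5}$ (up to constants depending on $L$ and $C$), so that $n = c\,\varepsilon^{-2.5}$ for small enough $c$ keeps $\chi^2 = O(1)$ and hence $\|u^{\otimes n} - \bar{q}^{\otimes n}\|_{\mathrm{TV}}$ bounded below $1$; getting this exponent of $\varepsilon$ exactly right, rather than off by a factor stemming from the interplay between bump height and width, is where the analysis must be most precise.
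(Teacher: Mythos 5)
Your proposal is correct and follows essentially the same route as the paper's proof: the same triangular-bump construction, the reduction via LeCam to testing $u$ against the uniform mixture over $\cQ$, a $\chi^2$ bound on total variation, and the identical scaling balance $n^2 \cdot (\varepsilon^3/L)^2 \cdot N_{\varepsilon} \asymp 1$ (the paper arrives at $\chi^2 \le \exp\left(n^2 L^4 \Delta^5/72\right) - 1$ with $\Delta \asymp \varepsilon/L$, which is the same exponent). The only difference is bookkeeping: the paper Poissonizes the sample size so that the interval occupancies decouple and then evaluates the per-interval integrals $\int f_1^2$, $\int f_{-1}^2$, $\int f_1 f_{-1}$ together with the Poisson moment generating function, whereas you propose the equivalent Ingster--Suslina pairwise expansion over $(z,z')$ with Rademacher moment bounds; both devices produce the same bound.
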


Evidently, there is a significant gap between the lower bound on sample complexity and the sample complexity guarantees offered by ABC. This gap, we believe is rooted in that coincidence statistic is informative only in sparse regimes where the number of samples is of sublinear order of the support size. We conjecture that the lower bound is indeed tight, and the gap to the lower bound is unavoidable for tests using the coincidence statistic. In other words, we conjecture that while coincidence statistic is sufficient for achieving order-optimal sample complexity in the discrete case, it ceases to remain so in the continuous case. An interesting question here is to explore what would be a sufficient statistic and whether requiring the entire histogram is indeed necessary. Moreover, developing and analyzing sequential variants based on such test statistics also appears to an interesting future direction.

\section{Simulations}

\begin{figure}
    \centering
    \includegraphics[scale=0.6]{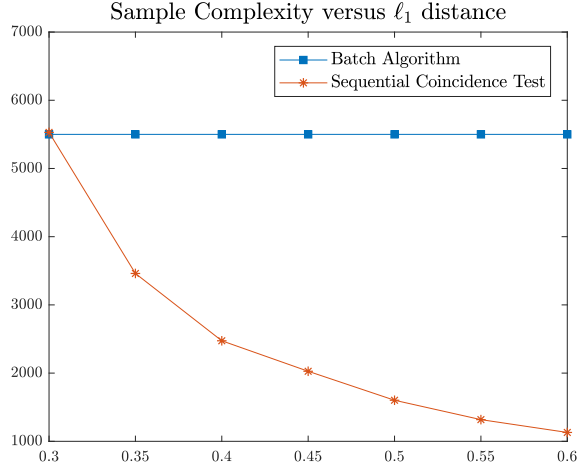}
    \caption{Sample Complexity versus $\ell_1$ distance. The plot demonstrates how the proposed Sequential Coincidence Test adapts to the underlying distribution offering better sample complexities.}
    \label{fig:sct_plot}
\end{figure}

We corroborate our theoretical findings by testing the algorithms empirically. We consider the problem of binary hypothesis testing described in~Section~\ref{sub:discrete_prob_des} with $m = 20000$ and $\varepsilon = 0.3$. In the experimental setup, we consider a set of distributions in $\cC(\varepsilon)$ parametrized by $\gamma$ and whose probability mass functions are given as follows:
\begin{align*}
    p^{\gamma}_{2i-1} = (1+ \gamma)/m; \ p^{\gamma}_{2i} = (1 - \gamma)/m,
\end{align*}
for $i = \{1,2, \dots, m/2\}$. It can be noted that the $\ell_1$ distance of $p^{\gamma}$ from the uniform distribution is $\gamma$. We consider $7$ distributions corresponding to the values of $\gamma \in \{0.3, 0.35, 0.4, 0.45, 0.5, 0.55, 0.6\}$. \\

For each of these distributions, we run the fixed sample size coincidence test proposed by~\cite{Paninski2008} and the Sequential Coincidence Test proposed in this work. For both the algorithms, the threshold is set to obtain an accuracy of at least $0.8$. The constants in the threshold for both the algorithms are optimized using grid search to give the best sample complexity. For the sequential coincidence test, the expected sample complexity is obtained by taking an average over $1000$ Monte Carlo runs. We then plot the number of samples taken by each algorithm against the $\ell_1$ distance of underlying distribution. The resulting plot is shown in Figure~\ref{fig:sct_plot}. As one would expect, the sample complexity for the coincidence test is the same for all the distributions. However, the sample complexity of the Sequential Coincidence Test adapts to the $\ell_1$ distance of the underlying distribution and decreases as the $\ell_1$ distance increases, demonstrating the benefit of the proposed approach.

\bibliography{citations}
\bibliographystyle{plainnat}	

\appendix

\newpage

\section{Proof of Theorem~\ref{thm:sct_samp_complexity}}
\label{proof:theorem_sct_samp_complexity}

The central idea of this proof is to establish bounds on the moment generating function of the $K_1$ statistic. Once we have obtained the bound on MGF, the final result follows directly by an application of Markov's inequality and union bound. Hence, we first focus on bounding the MGF of $K_1$ and then would work on establishing bounds on probability of error and sample complexity. \\

Note that $K_1$ is a sub-Gaussian random variable since it is bounded, so we can obtain an upper bound on its moment generating function using this property. However, the variance proxy obtained by just using the boundedness of $K_1$ is not tight enough for our purposes. Thus, in order to obtain tight bounds on the variance proxy we employ an approach similar to the one used in \cite{Huang2012}. Since their focus was mainly on asymptotic analysis of the coincidence test for obtained error rates, we need to appropriately modify the approach to obtain bounds for finite sample regime. \\

Before we begin the proof, we set up some additional notation that we will be using throughout the proof. The underlying discrete distribution is denoted by $p$. If $\cS$ denotes a set of i.i.d. samples from $p$ such that $|\cS| = n$, then with a slight abuse of notation, we denote $K_1(\cS)$ as $K_1(n)$ since the relevant quantity throughout the analysis will be the size of the sample. We derive the bounds for a general $n \geq \sqrt{m}$ number of samples, followed by substituting the particular choices later. Lastly, we assume that $n/m \leq \varepsilon^2/1536$. While the sparse regime is required to use the coincidence test, the additional dependence on $\varepsilon$ is a result of the particular technique being employed to bound the MGF. We conjecture that requirement can be relaxed by using better analysis tools and techniques. \\

From \cite{Huang2012} (eqn. (38)), we have that the MGF satisifes the following equation:
\begin{align}
    \E_{p}\left[ \exp \left( \theta \tilde{K}_1(n) \right)\right] = e^{-\theta n}\frac{n!}{2 \pi i} \oint g(\lambda) d \lambda, 
    \label{eqn:MGF_defn}
\end{align}
where $\tilde{K}_1(n) = K_1(n) - n$ and
\begin{align*}
    g(\lambda) = e^{\lambda} \prod_{j = 1}^m \left(1 - \lambda p_j e^{-\lambda p_j} + \lambda p_j e^{-\lambda p_j} e^{\theta} \right) \frac{1}{\lambda^{n+1}}.
\end{align*}
Throughout the analysis, we assume that $\theta \in [-0.4 ,0.4]$. The integral in~\eqref{eqn:MGF_defn} is estimated using the saddle point method~\citep{de1981}. This approach consists of two steps. In the first step, a particular contour around $\lambda = 0$ is chosen to carry out the integration. The choice of this contour is such that $g(\lambda)$ behaves violently along it, i.e., $g(\lambda)$ is very large for a very small interval and significantly smaller on the rest of the contour, similar to a dirac delta function. This violent behaviour allows one to approximate the integral by only evaluating it on the small interval where the function value is very large. Such a contour is usually obtained by identifying a saddle point of $g(\lambda)$, a point where the derivative of $g$ goes to zero and choosing to contour to pass through this point. The second step involves estimating the integral along the contour using the Laplace method.  \\

To choose the contour, we first differentiate $g$ to find the saddle point. On differentiating, we obtain,
\begin{align*}
    g'(\lambda) = g(\lambda) \left( \sum_{j = 1}^m \left( \frac{p_j(e^{\theta} - 1 + e^{\lambda p_j})}{\lambda p_j(e^{\theta} - 1) + e^{\lambda p_j}} \right) - \frac{n + 1}{\lambda} \right).
\end{align*}

Instead of exactly choosing the minimizer, we choose a point very close to it, defined as the solution to the following equation:
\begin{align}
    \frac{\lambda p_j(e^{\theta} - 1 + e^{\lambda p_j})}{\lambda p_j(e^{\theta} - 1) + e^{\lambda p_j}} = n. \label{eqn:saddle_pt_eqn}
\end{align}

It is established in~\cite{Huang2012} that the above equation has a unique non-negative solution, which we denote using $\lambda_0$. Furthermore, it satisfies the following inequalities:
\begin{align}
    \text{For } \theta \geq 0: & \ \ \ \  ne^{-\theta} \leq \lambda_0 \leq n( 1 + e^{-1}(e^{\theta} - 1))  \nonumber \\
    \text{For } \theta < 0: & \ \ \ \  n( 1 + e^{-1}(e^{\theta} - 1)) \leq \lambda_0 \leq ne^{-\theta}. \label{eqn:lambda_0_relations}
\end{align}
From the above inequalities, it is clear that $\lambda_0 = O(n)$. We can also obtain a more refined approximation of the relation between $\lambda_0$ and $n$ as follows. We first split the set of symbols into two set based on the magnitude of their probabilities and obtain the approximation separately for these sets. In particular, we define $\cW = \{j : p_j \geq 8/m \}$. Using this, we define $\beta(p) = \sum_{j \in \cW} p_j$. \\

We first focus on the symbols that not do not belong in $\cW$, i.e., $j \notin \cW$. For these symbols, the following set of inequalities hold.
\begin{align}
    \lambda_0 p_j e^{\theta} + \lambda_0^2 p_j^2 (1 - e^{2\theta}) + l(\theta) \lambda_0^3 p_j^3  \leq \frac{\lambda_0 p_j (e^\theta - 1 + e^{\lambda p_j})}{\lambda_0 p_j (e^\theta - 1) + e^{\lambda_0 p_j}} \leq \lambda_0 p_j e^{\theta} + \lambda_0^2 p_j^2 (1 - e^{2\theta}) + u(\theta) \lambda_0^3 p_j^3 , \label{eqn:bounds_for_W_bar}
\end{align}
where $l(\theta)$ and $u(\theta)$ are the following functions of $\theta$:
\begin{align*}
    l(\theta) & = \begin{cases} \frac{1}{6}\left(6e^{3\theta} - 9e^{\theta} + 3 \right) & \text{ if } \theta <0  \\ 0 & \text{ if } \theta \geq 0 \end{cases} \\
    u(\theta) & = \begin{cases} 0 & \text{ if } \theta <0  \\ \frac{1}{6}\left(6e^{3\theta} - 9e^{\theta} + 3 \right) & \text{ if } \theta \geq 0 \end{cases}.
\end{align*}
For the symbols in $\cW$, we have,
\begin{align*}
    \frac{\lambda_0 p_j (e^\theta - 1 + e^{\lambda p_j})}{\lambda_0 p_j (e^\theta - 1) + e^{\lambda_0 p_j}} = D_j \lambda_0 p_j e^{\theta}, 
\end{align*}
where $\displaystyle D_j = \frac{e^{-\theta} + e^{-\lambda_0 p_j}(1 - e^{-\theta})}{1 + \lambda_0 p_j e^{-\lambda_0 p_j} (e^{\theta} - 1)}$.  \\

On plugging these approximations for different regimes in~\eqref{eqn:saddle_pt_eqn}, we obtain that $\lambda_0$ is of the form, $\displaystyle \lambda_0 = \dfrac{ne^{-\theta}(1 + w)}{1 + \sum_{j \in \cW}p_j(D_j - 1)}$, where $w$ is the small approximating factor, on which we will obtain bounds. For brevity, we define $D_* := 1 + \sum_{j \in \cW}p_j(D_j - 1)$. Lastly, we can obtain the range on the ratio $(1 + w)/D_*$ from the relations in~\eqref{eqn:lambda_0_relations}.  \\

On summing the lower bound over $j$ in~\eqref{eqn:bounds_for_W_bar}, we obtain,
\begin{align*}
    n & \geq \sum_{j \in \cW} D_j \lambda_0 p_j e^{\theta} +   \sum_{j \notin \cW} \left[ \lambda_0 p_j e^{\theta} + \lambda_0^2 p_j^2 (1 - e^{2\theta})  + l(\theta) \lambda_0^3  p_j^3 \right]   \\
    n & \geq \lambda_0 D_*e^{\theta} +   \lambda_0^2 \sum_{j \notin \cW}   p_j^2 (1 - e^{2\theta})  + l(\theta) \lambda_0^3 \sum_{j \notin \cW} p_j^3   \\
    n & \geq  n(1 + w) +   \frac{n^2}{{D}_*^2} (1 + w)^2 \sum_{j \notin \cW}   p_j^2 (e^{-2\theta} - 1) + l(\theta) n^3 \left( \frac{1 + w}{D_*}\right)^3 \sum_{j \notin \cW} p_j^3 
\end{align*}
Consequently, we have, 
\begin{align}
    w & \leq \frac{n}{{D}_*^2} (1 + w)^2 \sum_{j \notin \cW}   p_j^2 (1 - e^{-2\theta}) - l(\theta) n^2 \left( \frac{1 + w}{D_*}\right)^3 \sum_{j = 1}^m p_j^3 . \label{eqn: w_upper_bound}
\end{align}

Similarly, summing over the upper bound yields us the following relation.
\begin{align*}
    n & \leq \sum_{j \in \cW} D_j \lambda_0 p_j e^{\theta} +   \sum_{j \notin \cW} \left[ \lambda_0 p_j e^{\theta} + \lambda_0^2 p_j^2 (1 - e^{2\theta}) + u(\theta) \lambda_0^3 p_j^3 \right]  \\
    n & \leq \lambda_0 {D}_* e^{\theta} +   \lambda_0^2 \sum_{j \notin \cW}   p_j^2 (1 - e^{2\theta}) + u(\theta) \lambda_0^3 \sum_{j \notin \cW}   p_j^3  \\
    n & \leq n(1 + w) + \frac{n^2}{{D}_*^2} (1 + w)^2 \sum_{j \notin \cW}   p_j^2 (e^{-2\theta} - 1)  + u(\theta) n^3 \left( \frac{1 + w}{D_*} \right)^3 \sum_{j \notin \cW}   p_j^3   \\
\end{align*}
Consequently, we have, 
\begin{align}
     w \geq \frac{n(1 + w)^2}{{D}_*^2} \left( \sum_{j \notin \cW}   p_j^2 \right) (1 - e^{-2\theta}) - u(\theta) n^2 \left( \frac{1 + w}{D_*} \right)^3 \sum_{j \notin \cW}   p_j^3 . \label{eqn: w_lower_bound}
\end{align}

With these relations at our disposal, we now move on to evaluating the integral in~\eqref{eqn:MGF_defn} along the closed contour $\lambda = \lambda_0 e^{i \psi}$ for $\psi \in [-\pi, \pi]$. The integral reads as
\begin{align*}
    \E_{p}\left[ \exp \left( \theta \tilde{S}^*_n \right) \right] & = e^{-\theta n} \frac{n!}{2\pi} \int_{-\pi}^{\pi} g(\lambda_0 e^{i \psi})  \lambda_0 e^{i \psi} \ \mathrm{d} \psi \\
    & = \frac{n!}{2\pi} \lambda_0^{-n} e^{-\theta n}  \text{Re} \left[ \int_{-\pi}^{\pi} h(\psi) \ \mathrm{d} \psi \right],
\end{align*}
where 
\begin{align*}
    h(\psi) = e^{-in\psi} \prod_{j = 1}^m \left( \lambda_0 p_j (e^{\theta} - 1) e^{i \psi} + e^{\lambda_0 p_j e^{i \psi}} \right).
\end{align*}
Instead of $h(\psi)$, we focus on bounding $H(\psi) = \log(h(\psi))$ since it is easier to deal with sums than products. We have,
\begin{align*}
     H(\psi) & = - in\psi + \sum_{j=1}^m \log \left(  \lambda_0 p_j (e^\theta - 1) e^{i \psi} + e^{\lambda_0 p_j e^{i \psi}} \right).
\end{align*}

We split the integral into three parts by evaluating it over three different ranges, i.e., $[-\pi, -\pi/2)$, $[-\pi/2, \pi/2]$ and $(\pi/2, \pi]$. We first consider the integral over  $[-\pi/2, \pi/2]$. This is the region where the integrand behaviour behaves violently and thus will correspond to the dominant term. We have, 
\begin{align*}
     \text{Re}(H(\psi)) & = \sum_{j=1}^m \text{Re} \left[ \log \left(  \lambda_0 p_j (e^\theta - 1) e^{i \psi} + e^{\lambda_0 p_j e^{i \psi}} \right) \right] \\
     & = \sum_{j=1}^m \text{Re} \left[ \log( e^{\lambda_0 p_j e^{i \psi}}) +   \log \left( 1 + \lambda_0 p_j (e^\theta - 1) e^{i \psi} e^{-\lambda_0 p_j e^{i \psi}} \right) \right] \\
     & \leq  \sum_{j=1}^m  \left[ \lambda_0 p_j \cos(\psi) +   \log \left( 1 + \lambda_0 p_j   e^{-\lambda_0 p_j \cos(\psi)} (e^\theta - 1) \right) \right].
\end{align*}

We define a function $G(\psi; u)$ that corresponds to the general form of the RHS in the above expression. That is for $u \geq 0$, 
\begin{align*}
    G(\psi; u) =  u \cos(\psi) +   \log \left( 1 + u   e^{-u \cos(\psi)} (e^\theta - 1) \right).
\end{align*}
Note that $G'(0; u) = 0$ and $G''(0; u) \leq -0.4u$ for $\psi \in [-\pi/2, \pi/2]$. Thus, by using mean value theorem it can be shown that $G(\psi; u)$ satisfies $G(\psi; u) \leq G(0; u) - 0.2 u \psi^2 $ for $\psi \in [-\pi/2, \pi/2]$ for all $u \geq 0$. On plugging this inequality in previous one, we have,
\begin{align*}
     \text{Re}(H(\psi)) & \leq  \sum_{j=1}^m  G(\psi; \lambda_0 p_j) \\
     & \leq  \sum_{j=1}^m  \left[ G(0; \lambda_0 p_j) - 0.2 \lambda_0 p_j \psi^2 \right] \\
     & \leq  \sum_{j=1}^m  \left[ \lambda_0 p_j  +   \log \left( 1 + \lambda_0 p_j   e^{-\lambda_0 p_j } (e^\theta - 1) \right) \right] - 0.2 \lambda_0 \psi^2  \\
     & \leq  H(0) - 0.2 \lambda_0 \psi^2.
\end{align*}

Using this relation, we can establish the following bound.
\begin{align}
    \text{Re} \left[ \int_{-\pi/2}^{\pi/2} h(\psi) \ \mathrm{d} \psi \right] & = \text{Re} \left[ \int_{-\pi/2}^{\pi/2} \exp(H(\psi)) \ \mathrm{d} \psi \right] \nn \\
    & \leq  \int_{-\pi/2}^{\pi/2} |\exp(H(\psi))| \ \mathrm{d} \psi \nn \\
    & \leq  \int_{-\pi/2}^{\pi/2} \exp(\text{Re}(H(\psi))) \ \mathrm{d} \psi \nn \\
    & \leq  e^{H(0)} \int_{-\pi/2}^{\pi/2} e^{-0.2\lambda_0 \psi^2} \ \mathrm{d} \psi \nn \\
    & \leq  e^{H(0)} \int_{-\infty}^{\infty} e^{-0.2\lambda_0 \psi^2} \ \mathrm{d} \psi \nn \\
    & \leq e^{H(0)} \sqrt{\frac{\pi}{0.1\lambda_0}}. \label{eqn:pi2_to_pi2_bound}
\end{align}

For $\psi \in [-\pi, -\pi/2] \cup [\pi/2, \pi]$, we have $|e^{\lambda_0 p_j e^{i\psi}}| \leq 1$. Consequently, $|\lambda_0 p_j (e^\theta - 1) e^{i \psi} + e^{\lambda_0 p_j e^{i \psi}}| \leq 1 + \lambda_0 p_j(e^{\theta} - 1)$. Thus, we have,
\begin{align*}
    \text{Re}(H(\psi)) & = \sum_{j=1}^m \text{Re} \left[ \log \left(  \lambda_0 p_j (e^\theta - 1) e^{i \psi} + e^{\lambda_0 p_j e^{i \psi}} \right) \right] \\
    & \leq  \sum_{j=1}^m \log \left(  1 + \lambda_0 p_j |(e^\theta - 1)| \right)  \\
     & \leq  \sum_{j=1}^m  \lambda_0 p_j |(e^\theta - 1)|   \\
     & \leq   \lambda_0  |(e^\theta - 1)|.
\end{align*}

Using this bound, we can bound the following integral.
\begin{align*}
    \text{Re} \left[ \int_{-\pi}^{-\pi/2} h(\psi) \ \mathrm{d} \psi \right] & = \text{Re} \left[ \int_{-\pi}^{-\pi/2} \exp(H(\psi)) \ \mathrm{d} \psi \right] \\
    & \leq  \int_{-\pi}^{-\pi/2} |\exp(H(\psi))| \ \mathrm{d} \psi \\
    & \leq  \int_{-\pi}^{-\pi/2} \exp(\text{Re}(H(\psi))) \ \mathrm{d} \psi \\
    & \leq   \int_{-\pi}^{-\pi/2} e^{\lambda_0 |e^{\theta} - 1|} \ \mathrm{d} \psi \\
    & \leq \frac{\pi}{2} e^{\lambda_0 |e^{\theta} - 1|}.
\end{align*}
We can similarly bound the integral for $[\pi/2, \pi]$. On combining all of them, we have, 
\begin{align}
    \E_{p}\left[ \exp \left( \theta \tilde{K}_1(n) \right) \right]  & = \frac{n!}{2\pi} \lambda_0^{-n} e^{-\theta n}  \text{Re} \left[ \int_{-\pi}^{\pi} h(\psi) \ \mathrm{d} \psi \right] \nn \\
    & \leq \frac{n!}{2\pi} \lambda_0^{-n} e^{-\theta n}   \left( e^{H(0)} \sqrt{\frac{\pi}{0.1\lambda_0}} +  \pi e^{\lambda_0 |(e^{\theta} - 1)|} \right)\nn \\
    & \leq \frac{n!}{2\pi n^n} \left(\frac{D_*}{1 + w}  \right)^n \left( e^{H(0)} \sqrt{\frac{\pi}{0.1\lambda_0}} +  \pi e^{\lambda_0 |(e^{\theta} - 1)|} \right). \label{eqn:mgf_upper_bound}
\end{align}
Since $\theta \in [-0.4, 0.4]$, the second term can be upper bounded as $e^{\lambda_0 |(e^{\theta} - 1)|} \leq e^{0.7n}$.  \\

Using the above bound on the moment generating function, we can evaluate bounds on the probability of error. We begin with bounding the probability of false alarm, that is, the underlying distribution is uniform and we fail to detect it. For this scenario, the following relation holds for $\theta < 0$, where $\tau_n$ denotes the threshold when $n$ samples have been taken and $P_e(n)$ denotes the error probability at that time instant.
\begin{align*}
    P_e(n) & = \Pr( K_1(n) - \E_u[K_1(n)] < -\tau_n) \\
    & = \Pr( \theta(K_1(n) - \E_u[K_1(n)]) > -\theta \tau_n) \\
    & = \Pr( \exp(\theta(K_1(n) - \E_u[K_1(n)])) > \exp(-\theta \tau_n)) \\
    & \leq \E \left[ \exp(\theta(K_1(n) - \E_u[K_1(n)])) \right] e^{\theta \tau_n} \\
    & \leq \E \left[ \exp(\theta(K_1(n) - n)) \right] \exp \left(\theta (n - \E_u[K_1(n)] + \tau_n) \right) \\
    & \leq \E \left[ \exp(\theta \tilde{K}_1(n))) \right] \exp \left( \theta\tau_n + \theta \left( \frac{n(n-1)}{m} - \frac{n}{2} \left( \frac{n-1}{m} \right)^2 \right)\right)   \\
    & \leq \frac{n!}{2\pi n^n} \left(\frac{D_*}{1 + w}  \right)^n \left( e^{H(0)} \sqrt{\frac{\pi}{0.1\lambda_0}} +  \pi e^{0.7n} \right) \exp \left( \theta\tau_n + \theta \left( \frac{n(n-1)}{m} - \frac{n}{2} \left( \frac{n-1}{m} \right)^2 \right)\right) \\
    & \leq \frac{n! e^n}{2\pi n^n} \left(\frac{D_*}{1 + w}  \right)^n \left( e^{H(0) - n} \sqrt{\frac{\pi}{0.1\lambda_0}} +  \pi e^{-0.3n} \right) \exp \left( \theta\tau_n + \theta \left( \frac{n(n-1)}{m} - \frac{n}{2} \left( \frac{n-1}{m} \right)^2 \right)\right)
\end{align*}

Since the underlying distribution is uniform, $\cW = \emptyset$ and consequently $D_* = 1$. The expression $H(0)$ can be bounded as follows for $\theta < 0$.
\begin{align*}
    H(0) & = \sum_{j=1}^m \log \left(  \lambda_0 p_j (e^\theta - 1) + e^{\lambda_0 p_j} \right) \\
    & \leq \sum_{j =1 }^m \left[ \lambda_0 p_j e^{\theta} +  \frac{1}{2} \lambda_0^2 p_j^2 (1 - e^{2\theta}) \right].
\end{align*}

Using the above relation, we first bound the first term, which we denote by $T_1$, as that is the dominant term in the expression. We have,
\begin{align*}
    T_1 & = \left(\frac{D_*}{1 + w}  \right)^n \exp \left( H(0) - n + \theta\tau_n + \theta \left( \frac{n(n-1)}{m} - \frac{n}{2} \left( \frac{n-1}{m} \right)^2 \right)\right) \\
    & \leq \left({1 + w}  \right)^{-n} \exp \left( \sum_{j =1 }^m \left[ \lambda_0 p_j e^{\theta} +  \frac{1}{2} \lambda_0^2 p_j^2 (1 - e^{2\theta}) \right]  - n + \theta\tau_n + \theta \left( \frac{n(n-1)}{m} - \frac{n}{2} \left( \frac{n-1}{m} \right)^2 \right)\right) \\
    & \leq \left(1+ w  \right)^{-n} \exp \left(  \lambda_0  e^{\theta} +  \frac{n^2}{2m} ( 1+w)^2 (e^{-2\theta} -1)   - n + \theta\tau_n + \theta \left( \frac{n(n-1)}{m} - \frac{n}{2} \left( \frac{n-1}{m} \right)^2 \right)\right) \\
    & \leq \exp \left(    \frac{n^2}{2m} (-2\theta + 4\theta^2)  + \theta\tau_n + \theta \left( \frac{n(n-1)}{m} - \frac{n}{2} \left( \frac{n-1}{m} \right)^2 \right) + n(w - \log(1 + w))\right) \\
    & \leq \exp \left(  \frac{2n^2\theta^2}{m}   +  \theta \left(\tau_n -\frac{n}{m} - \frac{n}{2} \left( \frac{n}{m} \right)^2 \right)  + 2nw^2 \right).
\end{align*}

Using the condition on the ratio of $n$ and $m$, we have
\begin{align*}
    \frac{n}{m} + \frac{n^3}{2m^2} &\leq \frac{n^3}{m^2} \\
    &\leq \frac{n^2\epsilon^2}{1536m}.
\end{align*}

Note that for the particular choice of $\tau$ and corresponding decision instant in the Sequential Coincidence Test satisfies $n^2 \varepsilon^2 /1536m \leq \tau_n/75$ for all epochs $k$. Lastly, using~\eqref{eqn: w_upper_bound}, we have, $2nw^2 \leq 2n \left( 3.76\frac{n}{m} (1 - e^{-2\theta})\right)^2 \leq \frac{114n^2 \epsilon^2 \theta^2}{1536m} $. On combining these two bounds with the bound on $T_1$, we obtain,
\begin{align*}
    T_1 & \leq \exp \left(  \frac{n^2\theta^2}{m} \left( 2 + \frac{114\epsilon^2}{1536} \right)+  \frac{74\theta \tau_n}{75} \right).
\end{align*}
On plugging $\theta = -\frac{74m\tau_n}{375n^2}$, we obtain
\begin{align*}
    T_1 & \leq \exp \left(  -\frac{5476}{56250} \cdot \frac{m\tau_n^2}{n^2} \right).
\end{align*}

On evaluating the above expression at decision instant for epoch $k$, that is, at $n = n_k$ and $\tau = \tau_k$, we obtain
\begin{align*}
    T_1 & \leq \exp \left(  -3 \log(k + 2/{\delta}) \right) \\
    & \leq \frac{1}{(k+2/\delta)^3}.
\end{align*}

Let us consider the second term of the expression with the above value of $\theta$, denoted by $T_2$. We have,
\begin{align*}
    T_2 & = \left(\frac{D_*}{1 + w}  \right)^n \exp\left( -0.3n + \theta\tau_n + \theta \left( \frac{n(n-1)}{m} - \frac{n}{2} \left( \frac{n-1}{m} \right)^2 \right)\right) \\
    & \leq \exp\left( -0.3n  - \frac{n \theta}{2} \left( \frac{n-1}{m} \right)^2 - n \log(1 + w) \right) \\
    & \leq \exp\left( -0.25n \right),
\end{align*}
where the last expression follows using the value of $\theta$ and bound on $w$ obtained from~\eqref{eqn: w_lower_bound}. \\

On combining both the expressions, we obtain the following result for the probability of error at the decision instant corresponding to epoch $k$.
\begin{align*}
    P_e(n_k) & \leq \frac{n_k! e^{n_k}}{2\pi {n_k}^{n_k}} \left( \frac{1}{(k+2/\delta)^3} \cdot \sqrt{\frac{18\pi}{n_k}} +  \pi e^{-0.25n_k} \right)  \\
    & \leq e^{1/12n_k}   \left( \frac{3}{(k+2/\delta)^3}  +  \sqrt{\frac{n_k\pi}{2}} e^{-0.25n_k} \right) \\
\end{align*}
In the last step above we have used Stirling's approximation. Using the above expression for $P_e(n_k)$, we can obtain the probability for false alarm, which we denote by $\Pr(\text{err}|H_0)$. 
\begin{align*}
	\Pr(\text{err}|H_0) & = \Pr \left( \bigcup_{k = 1}^{\kappa} \left\{ \E_u[K_1(n_k)] - K_1(n_k) > \tau_{k} \right\} \right) \\
	& \leq \sum_{k = 1}^{\kappa} \Pr \left(  K_1^{(n_k)} - \E_u[K_1^{(n_k)}] < -\tau_k \right) \\
	& \leq \sum_{k = 1}^{\kappa} P_e(n_k) \\
	& \leq \sum_{k = 1}^{\kappa} e^{1/12n_k}   \left( \frac{3}{(k+2/\delta)^3}  +  \sqrt{\frac{n_k\pi}{2}} e^{-0.25n_k} \right) \\
	& \leq \left( 1 + \frac{1}{6\sqrt{m}} \right) \sum_{k = 1}^{\kappa}    \left( \frac{3}{(k+2/\delta)^3}  +  \sqrt{\frac{\sqrt{m}\pi}{2}} e^{-0.25\sqrt{m}} \right) \\
	& \leq \left( 1 + \frac{1}{6\sqrt{m}} \right)     \left( 3 \int_0^{\infty} \frac{1}{(x+2/\delta)^3} \ dx  +  \sqrt{\frac{\sqrt{m}\pi}{2}} \frac{96 e^{-0.25\sqrt{m}}}{\varepsilon^2} \right) \\
    & \leq \left( 1 + \frac{1}{6\sqrt{m}} \right)     \left( 3 \left( \frac{\delta}{2} \right)^2  +  \sqrt{\frac{\sqrt{m}\pi}{2}} \frac{112 e^{-0.25\sqrt{m}}}{\varepsilon^2} \right),
\end{align*}
where the fifth line follows by noting that $e^x \leq 1 + 2x$ for $x \leq 1$, $\sqrt{x}e^{-x/4}$ is decreasing for $x > 2$ and $n_k \geq \sqrt{m}$. On plugging in the lower bound for $m$, we obtain that the above expression is less than $\delta$. \\

Having obtained the bounds for false alarm, we now apply a similar process for bounding the probability of miss detection. In this case, the underlying distribution $p$ belongs to $\cC(\varepsilon)$ such that $\| p - u \|_1 = \gamma > \varepsilon$. For this scenario, the following relation holds for $\theta \geq 0$, with $\tau_n$ and $P_e(n)$ defined in a similar way as the previous case.
\begin{align*}
    \Pr(\text{err}) & = \Pr( \E_u[K_1(n)] - K_1(n) < \tau_n) \\
    & = \Pr( K_1(n) - \E_u[K_1(n)] > - \tau_n) \\
    & = \Pr( \theta(K_1(n) - \E_u[K_1(n)]) > -\theta \tau_n) \\
    & = \Pr( \exp(\theta(K_1(n) - \E_u[K_1(n)]) > \exp(-\theta \tau_n)) \\
    & \leq \E \left[ \exp(\theta(K_1(n) - \E_u[K_1(n)]) \right] e^{\theta \tau_n} \\
    & \leq \E \left[ \exp(\theta(K_1(n) - n)) \right] \exp \left(\theta (n - \E_u[K_1(n)] + \tau_n) \right) \\
    & \leq \E \left[ \exp(\theta(K_1(n) - n)) \right] \exp \left( \theta \left(\tau_n + \frac{n^2}{m} \right) \right) \\
    & \leq \frac{n!e^n}{2\pi n^n} \left(\frac{D_*}{1 + w}  \right)^n  \left( e^{H(0) -n } \sqrt{\frac{\pi}{0.1 \lambda_0 }} +  \pi e^{-0.3n} \right)  \exp \left( \theta \left(\tau_n + \frac{n^2}{m} \right) \right).
\end{align*}

For this scenario, $H(0)$ can be bounded as follows for $\theta \geq 0$.
\begin{align*}
    H(0) & = \sum_{j=1}^m \log \left(  \lambda_0 p_j (e^\theta - 1) + e^{\lambda_0 p_j} \right) \\
    & \leq \sum_{j \notin \cW} \left[\lambda_0 p_j e^{\theta} +  \frac{1}{2} \lambda_0^2 p_j^2  (1 - e^{2\theta})  + \frac{\theta e^{3\theta}}{2} \lambda_0^3 p_j^3  \right]   + \sum_{j \in \cW} \left(\lambda_0 p_j e^{\theta} +  \lambda_0 p_j (1 - e^{\lambda_0 p_j})(1 - e^{\theta})\right)
\end{align*}

Once again, we focus on the first term in the bound of $P_e(n)$, which we denote by $T_1'$, as that is the dominant term. Using the relation on $H(0)$ obtained from above, we have,
\begin{align*}
    T_1' & = \left(\frac{D_*}{1 + w}  \right)^n \exp \left( H(0) - n + \theta \left(\tau_n + \frac{n^2}{m} \right)\right) \\
    & \leq \left(\frac{D_*}{1 + w}  \right)^n \exp \left( - n + \theta \left(\tau_n + \frac{n^2}{m} \right) \right) \times \\
    & \ \ \ \exp\left( \sum_{j \notin \cW}\lambda_0 p_j e^{\theta} +  \frac{1}{2}\sum_{j \notin \cW} \lambda_0^2 p_j^2  (1 - e^{2\theta})  + \frac{\theta e^{3\theta}}{2} \sum_{j \notin \cW} \lambda_0^3 p_j^3    + \sum_{j \in \cW} \left(\lambda_0 p_j e^{\theta} +  \lambda_0 p_j (1 - e^{\lambda_0 p_j})(1 - e^{\theta})\right)     \right)  \\
    & \leq \left(\frac{D_*}{1 + w}  \right)^n \exp \left( - n + \theta \left(\tau_n + \frac{n^2}{m} \right) \right) \times \\
    & \ \ \ \exp\left( \lambda_0  e^{\theta} +  \frac{1}{2}\sum_{j \notin \cW} \lambda_0^2 p_j^2  (1 - e^{2\theta})  + \frac{\theta e^{3\theta}}{2} \sum_{j \notin \cW} \lambda_0^3 p_j^3    + \sum_{j \in \cW} \left( \lambda_0 p_j (1 - e^{\lambda_0 p_j})(1 - e^{\theta})\right)  \right)  \\
    & \leq \left(\frac{D_*}{1 + w}  \right)^n \exp \left( - n + \theta \left(\tau_n + \frac{n^2}{m} \right) \right) \times \\
    & \ \ \ \exp\left( \frac{n(1 + w)}{D_*} +  \frac{n^2}{2} \left( \frac{1 + w}{D_*}\right)^2 \sum_{j \notin \cW} p_j^2  (e^{-2\theta} - 1)  + \frac{32\theta n^3}{m^2} \left( \frac{1 + w}{D_*}\right)^3   + \sum_{j \in \cW} \left( \lambda_0 p_j (1 - e^{\lambda_0 p_j})(1 - e^{\theta})\right)  \right)  \\
    & \leq  \exp \left( \frac{n^2}{2} \sum_{j \notin \cW} p_j^2  (e^{-2\theta} - 1) + \theta \left(\tau_n + \frac{n^2}{m} \right) + \frac{32\theta n^3}{m^2} \left( \frac{1 + w}{D_*}\right)^3 \right) \times \\
    & \ \ \ \ \ \ \ \ \ \ \exp \left( n \left\{   \frac{(1+w)}{D_*} \left(1 + \sum_{j \in \cW} p_j (1 - e^{\lambda_0 p_j})(e^{-\theta} - 1) \right)  - 1  - \log\left( \frac{1 + w}{D_*} \right) \right\}  \right).
\end{align*}

We consider the second term on RHS separately, denoting it by $J$. 
$$ J = \frac{(1+w)}{1 + \sum_{j \in \cW} p_j (D_j - 1) } \left(1 + \sum_{j \in \cW} p_j (1 - e^{\lambda_0 p_j})(e^{-\theta} - 1) \right)  - 1  - \log\left( \frac{1 + w}{1 + \sum_{j \in \cW} p_j (D_j - 1)} \right). $$

To analyse the relation between the terms $\sum_{j \in \cW} p_j (D_j - 1)$ and $\sum_{j \in \cW} p_j (1 - e^{\lambda_0 p_j})(e^{-\theta} - 1)$, we define two functions $D(x)$ and $E(x)$ as follows:
\begin{align*}
    D(x) &:= \frac{e^{-\theta} + e^{-x}(1 - e^{-\theta})}{1 + xe^{-x}(e^{\theta} - 1)} \\
    E(x) &:= (e^{-\theta} - 1)(1 - e^{-x}).
\end{align*}
Then we have, $D_j = D(\lambda_0 p_j)$ and $E(\lambda_0 p_j) = (1 - e^{\lambda_0 p_j})(e^{-\theta} - 1)$. It is not difficult to note that the function $F(x) = \frac{D(x) - 1}{E(x)}$ is decreasing for all $x \geq 0$ and it satisfies the relation $1 \leq F(x) \leq 1 + e^{\theta}$. Consequently, we have, for all $x \geq 0$, $(1 + e^{\theta})E(x) \leq D(x) - 1 \leq E(x)$ since $E(x) \leq 0$. Thus, we have,
\begin{align*}
    (1 + e^{\theta})\sum_{j \in \cW} p_j (1 - e^{\lambda_0 p_j})(e^{-\theta} - 1) \leq \sum_{j \in \cW} p_j (D_j - 1) \leq \sum_{j \in \cW} p_j (1 - e^{\lambda_0 p_j})(e^{-\theta} - 1).
\end{align*}
If we let $x = \sum_{j \in \cW} p_j (D_j - 1)$, then we can write $J$ as
$$ J(x)= \frac{(1 + w)(1 + \rho x)}{1 + x} - 1 - \log \left(  \frac{(1 + w)}{1 + x} \right),$$
where $\rho \in [(1 + e^{\theta})^{-1}, 1]$.  \\

Since $D(y) \geq e^{-2\theta}$ for all $y \geq 0$, the domain of $x$ is given as $x \in [-\beta(p)(1 - e^{-2\theta}), 0]$. As $\beta(p) < 1$ and $\theta < 0.4$, the function $J(x)$ is increasing throughout the domain of $x$. Consequently, $J(x) \leq J(0) \leq w^2$. Furthermore, over this domain, we can upper bound $J(x)$ as $J(x) = w^2 + 0.2x$.  \\

Lastly, since $D(x)$ is a decreasing function $D_j \leq D\left(\frac{8\lambda_0}{m} \right)$. Once again, using a local linear approximation of $D(x)$, we have the upper bound $D(x) \leq 1 + 0.7 (e^{-\theta} - e^{\theta}) x$. On combining everything, we obtain the following relation.
\begin{align*}
    J & = \frac{(1+w)}{1 + \sum_{j \in \cW} p_j (D_j - 1) } \left(1 + \sum_{j \in \cW} p_j (1 - e^{\lambda_0 p_j})(e^{-\theta} - 1) \right)  - 1  - \log\left( \frac{1 + w}{1 + \sum_{j \in \cW} p_j (D_j - 1)} \right) \\
    & \leq w^2 + 0.2 \sum_{j \in \cW} p_j (D_j - 1) \\
    & \leq w^2 + 0.2 \sum_{j \in \cW} p_j \left(D\left(\frac{8\lambda_0}{m} \right) - 1 \right) \\
    & \leq w^2 + 0.14 \sum_{j \in \cW} p_j \frac{8\lambda_0}{m} (e^{-\theta} - e^{\theta})  \\
    & \leq w^2 + 1.12 \beta(p) \frac{n (1 + w)}{m D_*} (e^{-2\theta} - 1) .
\end{align*}

On plugging this back into the bound for $T_1'$, we obtain,
\begin{align*}
    T_1' & \leq  \exp \left( \frac{n^2}{2} \sum_{j \notin \cW} p_j^2  (e^{-2\theta} - 1) + \theta \left(\tau_n + \frac{n^2}{m} \right) + \frac{32\theta n^3}{m^2} \left( \frac{1 + w}{D_*}\right)^3 \right) \times \\
    & \ \ \ \ \ \ \ \ \ \ \ \ \ \ \ \ \ \ \ \ \   \exp \left(n \left( w^2 + 1.12 \beta(p) \frac{n (1 + w)}{m D_*} (e^{-2\theta} - 1) \right) + nw^2 \right) \\
    & \leq  \exp \left( \frac{n^2}{2m} \left(\sum_{j \notin \cW} p_j^2  + 2 \beta(p) \right)  (e^{-2\theta} - 1) + \theta \left(\tau_n + \frac{n^2}{m} \right)  + \frac{32\theta n^3 }{m^2} \left( \frac{1 + w}{D_*}\right)^3 + nw^2  \right).
\end{align*}

Let $p_j = \frac{1}{m} + \Delta_j$. So $\sum_{j = 1}^m \Delta_j = 0$ and $\sum_{j = 1}^m |\Delta_j| = \gamma$, the actual $\ell_1$ distance to the uniform distribution. Using this, the first term can be written as,
\begin{align*}
     \frac{n^2}{2m} \left(m \sum_{j \notin \cW}p_j^2 + 2 \sum_{j \in \cW}p_j  \right) & \geq \frac{n^2}{2m} \left(m \sum_{j \notin \cW} \left( \frac{1}{m} + \Delta_j \right)^2 + 2 \sum_{j \notin \cW}\left( \frac{1}{m} + \Delta_j \right)  \right) \\
    & \geq \frac{n^2}{2m} \left(1 + m \sum_{j \notin \cW}  \Delta_j^2 +  \frac{|\cW|}{m}  \right) \\
    & \geq \frac{n^2}{2m} \left( 1 + \frac{\gamma^2}{4} \right),
\end{align*}
where the last step follows from $\sum_{j \notin \cW}  |\Delta_j| \geq \gamma/2$. The bound on $n/m$ and~\eqref{eqn:lambda_0_relations} gives us the following relation
\begin{align*}
    \frac{32 \theta n^3}{m^2} \left( \frac{1 + w}{D_*}\right)^3 & \leq \frac{\theta n^2 \epsilon^2 }{8m}.
\end{align*}
Lastly using~\eqref{eqn: w_upper_bound} and~\eqref{eqn:lambda_0_relations}, we have, $\displaystyle nw^2  \leq  \frac{n^3}{{D}_*^4} (1 + w)^4 \left(\sum_{j \notin \cW}   p_j^2 \right)^2   ( 1- e^{-2\theta})^2 \leq   \theta^2 \cdot  \frac{7n^2 \epsilon^2}{4m}  $. On plugging on these bounds in the bound for $T_1'$, we obtain,
\begin{align*}
    T_1' & \leq  \exp \left( -\frac{n^2}{2m} \left( 1 + \frac{\gamma^2}{4} \right)  (2\theta - 4\theta^2) + \theta \left(\tau_n + \frac{n^2}{m} \right)  + \frac{\theta n^2 \epsilon^2 }{8m} +  \frac{7 \theta^2 n^2 \epsilon^2}{4m} \right) \\
    & \leq  \exp \left( \frac{2n^2\theta^2}{m} \left( 1 + \frac{\gamma^2}{4} +  \frac{7 \epsilon^2}{8} \right)   - \theta \left(\frac{ n^2}{m} \left(\frac{\gamma^2}{4} - \frac{\epsilon^2}{8} \right) -\tau_n  \right)   \right) \\
    & \leq  \exp \left( \frac{2n^2\theta^2}{m} \left( 1 + \frac{\gamma^2}{4} +  \frac{7 \epsilon^2}{8} \right)   - \theta \left( \frac{n^2\gamma^2}{8m}  -\tau_n  \right)   \right)
\end{align*}

Note that the particular choice of $\tau$ and corresponding decision instant as used in Alg.~\ref{alg:SCT} satisfy $\frac{n^2\gamma^2}{8m} \geq 2\tau_n$ for all $k \geq 112/\gamma^2$. Thus, if we define $k_0(\gamma) = 112/\gamma^2$, then for all $k \geq k_0(\gamma)$, we have
\begin{align*}
    T_1' & \leq  \exp \left( \frac{4.25 n^2\theta^2}{m}   - \theta \tau_n   \right).
\end{align*}
On plugging $\displaystyle \theta = \frac{m\tau_n}{9.5n^2}$, we obtain, 
\begin{align*}
    T_1' & \leq  \exp \left( -\frac{m\tau_n^2}{18n^2}   \right).
\end{align*}

On evaluating the above expression at decision instant for epoch $k \geq k_0(\gamma)$, that is, at $n = n_k$ and $\tau = \tau_k$, we obtain
\begin{align*}
    T_1' & \leq \exp \left(  -2.5 \log(k + 2/{\delta}) \right) \\
    & \leq \frac{1}{(k+2/\delta)^{2.5}}.
\end{align*}

Similar to the previous case, we us consider the second term of the expression, denoted by $T_2'$ with the above value of $\theta$. Once again, we evaluate it at a decision instant for $k \geq k(\gamma)$. We have,
\begin{align*}
    T_2 & = \left(\frac{D_*}{1 + w}  \right)^n \exp\left( -0.3n + \theta \left(\tau_n + \frac{n^2}{m} \right)\right) \\
    & \leq \exp\left( -0.3n  + \frac{17\theta n^2}{16m}  - n \log((1 + w)/D^*) \right) \\
    & \leq \exp\left( -0.25n \right),
\end{align*}
where the last expression follows using the value of $\theta$ and bound on $w$ obtained from~\eqref{eqn: w_upper_bound}. \\

On combining both the expressions, we obtain the following result for the probability of error at the decision instant corresponding to epoch $k$.
\begin{align*}
    P_e(n_k) & \leq \frac{n_k! e^{n_k}}{2\pi {n_k}^{n_k}} \left( \frac{1}{(k+2/\delta)^{2.5}} \cdot \sqrt{\frac{18\pi}{n_k}} +  \pi e^{-0.25n_k} \right)  \\
    & \leq e^{1/12n_k}   \left( \frac{3}{(k+2/\delta)^{2.5}}  +  \sqrt{\frac{n_k\pi}{2}} e^{-0.25n_k} \right),
\end{align*}
where the last line again employs Stirling's Approximation. \\

Using the above expression for $P_e(n_k)$, we can obtain the probability for miss detection, which we denote by $\Pr(\text{err}|H_1)$. 
\begin{align*}
	\Pr(\text{err}|H_1) & = \Pr \left( \bigcap_{k = 1}^{\kappa} \left\{ \E_u[K_1(n_k)] - K_1(n_k) < \tau_{k} \right\} \right) \\
	& \leq  \Pr \left(  K_1^{(n_{\kappa})} - \E_u[K_1^{(n_{\kappa})}] > -\tau_{\kappa} \right) \\
	& \leq  P_e(n_{\kappa}) \\
	& \leq e^{1/12n_{\kappa}}   \left( \frac{3}{(\kappa +2/\delta)^{2.5}}  +  \sqrt{\frac{n_{\kappa}\pi}{2}} e^{-0.25n_{\kappa}} \right) \\ 
	& \leq \delta.
\end{align*}

Thus, we have shown that probability of miss detection is also upper bounded by $\delta$. For the last part, we established the expected sample complexity of the routine. Let $\Gamma$ denote the random number of sample taken by the procedure. For the scenario when the underlying distribution is uniform, we use a simple upper bound given as
\begin{align*}
    \E[\Gamma|H_0] & \leq n_{\kappa} \\
    & \leq \frac{112 \sqrt{m}}{\varepsilon^2} \sqrt{\log \left( \frac{112}{\varepsilon^2} + \frac{2}{\delta} \right)} + 1.
\end{align*}.

We now consider the case when the underlying distribution belongs to $\cC(\varepsilon)$ such that $\|p - u\|_1 = \gamma > \varepsilon$. The expected sample complexity is given as 
\begin{align*}
    \E[\Gamma|H_1] & = \sum_{k = 1}^{\kappa} n_{k} \Pr(\Gamma = n_k) \\
    & \leq n_{k_0(\gamma)} +   \sum_{k = k_0(\gamma) + 1}^{\kappa} n_{k} \Pr(\Gamma = n_k) \\
    & \leq n_{k_0(\gamma)} +   \sum_{k = k_0(\gamma) }^{\kappa} n_{k+1} \Pr(\Gamma > n_{k}) \\
    & \leq n_{k_0(\gamma)} +   \sum_{k = k_0(\gamma) }^{\kappa} n_{k+1} P_e(n_k) \\
    & \leq n_{k_0(\gamma)} +   \sum_{k = k_0(\gamma) }^{\kappa} n_{k+1}e^{1/12n_k}   \left( \frac{3}{(k+2/\delta)^{2.5}}  +  \sqrt{\frac{n_k\pi}{2}} e^{-0.25n_k} \right) \\
    & \leq n_{k_0(\gamma)} +   e^{1/12} \sum_{k = k_0(\gamma) }^{\kappa}   \left( \frac{4.5 \sqrt{m \log(k + 2/\delta)}}{(k+2/\delta)^{1.5}}  +  \sqrt{\frac{9\pi}{8}} n_k^{1.5} e^{-0.25n_k} \right) \\
    & \leq n_{k_0(\gamma)} +   e^{1/12}    \left( 4.5 \sqrt{m }\int_{k_0(\gamma) -1}^{\infty} \frac{ \log(x + 2/\delta)}{(x+2/\delta)^{1.5}} dx  +  \sqrt{\frac{9\pi}{8}} \sum_{k = k_0(\gamma) }^{\kappa} n_k^{1.5} e^{-0.25n_k} \right) \\
    & \leq n_{k_0(\gamma)} +   e^{1/12}    \left( 27 \sqrt{m} \left(k_0(\gamma) -1 + \frac{2}{\delta}\right)^{-1/2} \log\left(k_0(\gamma) -1 + \frac{2}{\delta}\right)  +  \sqrt{\frac{9\pi}{8}} C_0 \delta \sqrt{m}  \right),
\end{align*}
for some universal constant $C_0$. The dominant term in the above expression is $n_{k_0(\gamma)}$, giving us, $\displaystyle \E[\Gamma|H_1] = O\left( \gamma^{-2}\sqrt{m \log\left( \frac{1}{\gamma} + \frac{1}{\delta} \right)} \right)$, as required.

\section{Proof of Theorem~\ref{thm:abc_samp_complexity}}
\label{proof:theorem_abc_samp_complexity}

The proof of this theorem is heavily built on the proof of the previous theorem. The basic idea is to first show that for a fine enough discretization, the $\ell_1$ distance of resulting discrete distribution is the same as that of the continuous distribution upto a constant factor. Once this relation is established, we can simply invoke the results obtained in the previous theorem to obtain the results. We begin with the following lemma that relates the $\ell_1$ distances of the discrete and continuous distributions. 

\begin{lemma}
Let $p$ be a distribution in $\cC_L(\varepsilon)$ such that $\| p - u\|_1 = \gamma$ and let $p^{\Delta}$ be the discrete distribution obtained by a uniform discretization of the interval $[0,1]$ into $m$ bins. The $\ell_1$ distance of $p^{\Delta}$ from the uniform distribution, denoted by $[\gamma]_m$ satisfies the following relation
\begin{align*}
    [\gamma]_m = \sum_{i = 1}^m |p^{\Delta}_i - 1/m| \geq \gamma - L/m,
\end{align*}
where $p^{\Delta}_i$ denotes the probability mass of $p^{\Delta}$ in the $i^{\text{th}}$ bin. \label{lemma:l1_distance}
\end{lemma}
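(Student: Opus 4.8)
The plan is to work directly with the deviation density $g(x) := f_p(x) - 1$, which inherits the $L$-Lipschitz property from $f_p$, and to track exactly how much is lost when the continuous $\ell_1$ distance $\gamma = \int_0^1 |g(x)|\,dx$ is replaced by its binned version. Writing $B_i = [(i-1)/m,\, i/m]$ for the $i$-th bin and $h = 1/m$ for its width, the discrete excess mass is $p^{\Delta}_i - 1/m = \int_{B_i} g(x)\,dx$, so that
\begin{align*}
\gamma = \sum_{i=1}^m \int_{B_i} |g(x)|\,dx, \qquad [\gamma]_m = \sum_{i=1}^m \left| \int_{B_i} g(x)\,dx \right|.
\end{align*}
By the triangle inequality each term of $[\gamma]_m$ is bounded by the corresponding term of $\gamma$, so $[\gamma]_m \le \gamma$; the content of the lemma is therefore to bound the deficit $\gamma - [\gamma]_m$ by $L/m$. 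This deficit is precisely the cancellation incurred inside bins where $g$ changes sign.

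First I would reduce the problem to a per-bin estimate. The deficit splits as
\begin{align*}
\gamma - [\gamma]_m = \sum_{i=1}^m \left( \int_{B_i} |g(x)|\,dx - \left| \int_{B_i} g(x)\,dx \right| \right),
\end{align*}
with every summand nonnegative. Introducing the bin average $\bar g_i := \frac{1}{h}\int_{B_i} g(x)\,dx$, the subtracted piece equals $h|\bar g_i| = \int_{B_i} |\bar g_i|\,dx$, so the $i$-th summand is $\int_{B_i}(|g(x)| - |\bar g_i|)\,dx \le \int_{B_i} |g(x) - \bar g_i|\,dx$ by the reverse triangle inequality.

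The key step, and the only place the Lipschitz hypothesis enters, is to control $|g(x) - \bar g_i|$ uniformly on each bin. Since $\bar g_i$ is an average of $g$ over $B_i$, I would write $g(x) - \bar g_i = \frac{1}{h}\int_{B_i}\bigl(g(x) - g(y)\bigr)\,dy$ and apply $|g(x) - g(y)| \le L|x-y| \le Lh$ for $x,y \in B_i$, yielding the oscillation bound $|g(x) - \bar g_i| \le Lh$. Hence each bin contributes at most $\int_{B_i} Lh\,dx = Lh^2 = L/m^2$, and summing over the $m$ bins gives $\gamma - [\gamma]_m \le m \cdot L/m^2 = L/m$, which is the claimed inequality.

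I expect the main (and essentially the only) obstacle to be the possibility that $g$ changes sign several times within a single bin, which is exactly what creates the cancellation; the averaging argument above sidesteps any case analysis on the number of zero crossings by bounding the oscillation of $g$ across a bin by $Lh$ in one stroke, so no explicit count of sign changes is needed. The remaining manipulations are routine, and the resulting bound is of the expected order, matching the $L/m$ discretization error one anticipates for an $L$-Lipschitz density over bins of width $1/m$.
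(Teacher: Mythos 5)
Your proof is correct and takes essentially the same route as the paper's: both work bin by bin, compare the density to its bin average via a triangle-inequality split, bound that oscillation by $L/m$ using Lipschitz continuity, and sum the resulting $L/m^2$ per-bin deficit over the $m$ bins. The only (cosmetic) difference is that the paper invokes the mean value theorem to produce a point where $p$ equals its bin average before applying the Lipschitz bound, whereas you bound $|g(x) - \bar{g}_i|$ directly by averaging the Lipschitz estimate over the bin.
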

\begin{proof}
The proof of the result uses the Lipschitz continuity of the PDF of $p$ to bound the error between the $\ell_1$ distances of the continuous and the discrete distributions. We denote the continuous uniform distribution on $[0, 1]$ using $u(x)$. From the definition of $\ell_1$ distance, we have,
\begin{align*}
	\gamma &= \int_0^1 |p(x) - u(x)| dx \\
	& = \sum_{i = 0}^{m-1} \int_{i/m}^{(i+1)/m} |p(x) - u(x)| dx \\
	& = \sum_{i = 0}^{m-1} \int_{i/m}^{(i+1)/m} \left|p(x) - mp_i^{\Delta} + mp_i^{\Delta} - u(x)\right| dx \\
	& \leq \sum_{i = 0}^{m-1} \int_{i/m}^{(i+1)/m} (|p(x) - mp_i^{\Delta}| + |mp_i^{\Delta} - 1|) dx \\
	& \leq \sum_{i = 0}^{m-1} \int_{i/m}^{(i+1)/m} \left(\frac{L}{m}  + |mp_i^{\Delta} - 1|\right) dx \\
	& \leq \sum_{i = 0}^{m-1} \left[\frac{L}{m^2}  + \left|\int_{(i-1)/m}^{i/m} p(y) dy - \frac{1}{m}\right| \left(\int_{i/m}^{(i+1)/m} m dx \right) \right] \\
	& \leq \sum_{i = 0}^{m-1} \frac{L}{m^2}  + \sum_{i = 0}^{m-1} \left|p_i^{\Delta} - \frac{1}{m}\right|   \\
	& \leq \frac{L}{m} + [\gamma]_{m}.
\end{align*}
In the fifth step, we have used the mean value theorem along with the Lipschitz condition on the PDF. From the mean value theorem, we can conclude that there exists an $x_i \in [i/m, (i+1)/m]$ such that $p(x_i) = p_i^{\Delta}/(1/m) = m p_i$ for all $i = 0,1,2, \dots, m-1$ and since PDF is $L$-Lipschitz, we have $|p(x) - p(x_i)| \leq L/m$ for all $x \in [i/m, (i+1)/m]$. Consequently, we can relate the $\ell_1$ distances between the continuous and discrete distributions. In particular, if $m \geq L/2\gamma$, then $[\gamma]_{m} \geq \gamma/2$.
\end{proof}

Once we have obtained a discretization, the proof for the probability of error and sample complexity is almost the same as that of previous case with very minor modifications. Firstly, we can simply modify the set $\cW$ to $\cW_k$ defined for each epoch as $\cW_k = \{j : p^{\Delta}_j \geq 8/m_k \}$, where once again $p^{\Delta}_j$ is the mass in the $j^{\text{th}}$ bin in the discretization. Also for this case, instead of computing the error for any sample $n$, we just compute it for the decision instant $n_k$ and the corresponding number of bins $m_k$. \\

Once again we begin with the probability of false alarm. It can be verified that for the choice of $n_k$ and $m_k$, all the conditions in previous analysis are satisfied yielding us same bounds on $T_1$ and $T_2$ and consequently, the following result holds for the probability of error at the decision instant corresponding to epoch $k$.
\begin{align*}
    P_e(n_k)  & \leq e^{1/12n_k}   \left( \frac{3}{(k+2/\delta)^3}  +  \sqrt{\frac{n_k\pi}{2}} e^{-0.25n_k} \right).
\end{align*}
Using a similar process as in the previous proof, we can obtain the probability for false alarm, which we denote by $\Pr(\text{err}|H_0)$. 
\begin{align*}
	\Pr(\text{err}|H_0) & = \Pr \left( \bigcup_{k = 1}^{\kappa} \left\{ \E_u[K_1(n_k)] - K_1(n_k) > \tau_{k} \right\} \right) \\
	& \leq \sum_{k = 1}^{\kappa} \Pr \left(  K_1^{(n_k)} - \E_u[K_1^{(n_k)}] < -\tau_k \right) \\
	& \leq \sum_{k = 1}^{\kappa} P_e(n_k) \\
	& \leq \sum_{k = 1}^{\kappa} e^{1/12n_k}   \left( \frac{3}{(k+2/\delta)^3}  +  \sqrt{\frac{n_k\pi}{2}} e^{-0.25n_k} \right) \\
	& \leq \delta.
\end{align*}
The last line follows by the same reasoning as in the proof of the previous theorem. \\

We consider the probability of missed detection in a similar manner. In this scenario, the $\ell_1$ distance to the uniform distribution of the underlying distribution $p$ is $\gamma$ and for simplicity, we denote the discretized $\ell_1$ distance as $[\gamma]_k$ instead of $[\gamma]_{m_k}$. If we define $k_0(\gamma) = \min\{k : k \geq 144 [\gamma]_k^{-2}\}$, then using this definition of $k_0(\gamma)$, we can obtain all the results from the previous theorem. Specifically, for $k \geq k_0(\gamma)$, we have,
\begin{align*}
    \frac{32 \theta n_k^3}{m_k^2} \left( \frac{1 + w}{D_*}\right)^3 & \leq \frac{\theta n_k^2 [\gamma]_k^2 }{8m_k}, \\
    n_k w^2  & \leq   \theta^2 \cdot  \frac{7n_k^2 [\gamma]_k^2}{4m_k}, \\ 
     2\tau_{k} & \leq \frac{n_k^2[\gamma]_k^2}{8m_k}.
\end{align*}
Consequently, for $k \geq k_0(\gamma)$, we have, 
\begin{align*}
    T_1' & \leq  \exp \left( \frac{4.25 n_k^2\theta^2}{m_k}   - \theta \tau_k   \right).
\end{align*}
On plugging $\displaystyle \theta = \frac{m_k\tau_k}{9.5n_k^2}$, we obtain, 
\begin{align*}
    T_1' & \leq  \exp \left( -\frac{m_k\tau_k^2}{18n_k^2}   \right) \\
    & \leq \frac{1}{(k + 2/\delta)^{4.5}}.
\end{align*}
It is not difficult to see that $T_2' \leq \exp(-0.25n_k)$ for $k \geq k_0(\gamma)$, yielding a similar expression for $P_e(n_k)$ and consequently we can conclude that $\Pr(\text{err}|H_1) \leq \delta$. \\

The expected sample complexity for the uniform case is simply bounded as $n_{\kappa}$ implying that $\E[\Gamma | H_0] $ is $O(\varepsilon^{-6} \log(\varepsilon^{-1} + \delta^{-1})$. Lastly, the sample complexity for case when the underlying distribution belongs to $\cC_L(\varepsilon)$ such that $\|p - u\|_1 = \gamma > \varepsilon$ is given as follows.
\begin{align*}
    \E[\Gamma|H_1] & = \sum_{k = 1}^{\kappa} n_{k} \Pr(\Gamma = n_k) \\
    & \leq n_{k_0(\gamma)} +   \sum_{k = k_0(\gamma) + 1}^{\kappa} n_{k} \Pr(\Gamma = n_k) \\
    & \leq n_{k_0(\gamma)} +   \sum_{k = k_0(\gamma) }^{\kappa} n_{k+1} \Pr(\Gamma > n_{k}) \\
    & \leq n_{k_0(\gamma)} +   \sum_{k = k_0(\gamma) }^{\kappa} n_{k+1} P_e(n_k) \\
    & \leq n_{k_0(\gamma)} +   \sum_{k = k_0(\gamma) }^{\kappa} n_{k+1}e^{1/12n_k}   \left( \frac{3}{(k+2/\delta)^{4.5}}  +  \sqrt{\frac{n_k\pi}{2}} e^{-0.25n_k} \right) \\
    & \leq n_{k_0(\gamma)} +   e^{1/12} \sum_{k = k_0(\gamma) }^{\kappa}   \left( \frac{3 \sqrt{c_{0}} \log(k + 2/\delta)}{(k+2/\delta)^{1.5}}  +  7.5\sqrt{\frac{\pi}{2}} n_k^{1.5} e^{-0.25n_k} \right) \\
    & \leq n_{k_0(\gamma)} +   e^{1/12}    \left(3 \sqrt{c_{0}}\int_{k_0(\gamma) -1}^{\infty} \frac{ \log(x + 2/\delta)}{(x+2/\delta)^{1.5}} dx  +  7.5\sqrt{\frac{\pi}{2}} \sum_{k = k_0(\gamma) }^{\kappa} n_k^{1.5} e^{-0.25n_k} \right) \\
    & \leq n_{k_0(\gamma)} +   e^{1/12}    \left( 18 \sqrt{c_{0}} \left(k_0(\gamma) -1 + \frac{2}{\delta}\right)^{-1/2} \log\left(k_0(\gamma) -1 + \frac{2}{\delta}\right)  +  7.5\sqrt{\frac{\pi}{2}} C_1 \delta \sqrt{c_{0}}  \right),
\end{align*}
for some universal constant $C_1$. The dominant term in the above expression is $n_{k_0(\gamma)}$. From the particular choice of $m_k$ and Lemma~\ref{lemma:l1_distance}, we can conclude that $k_0(\gamma) \leq 576\gamma^{-2}$. This bound along with the expression for $n_{k_0(\gamma)}$ yields $\displaystyle \E[\Gamma|H_1] = O\left( \gamma^{-6} \log\left( {\gamma}^{-1} + {\delta}^{-1} \right) \right)$, as required.

\section{Proof of Theorem~\ref{thm:lower_bound}}
\label{proof:theorem_lower_bound}

In this section, we provide the proof of the lower bound on the number of samples required for the case of continuous distributions. The basic idea is very similar to the discrete case and involves constructing a mixture of distributions and then invoking LeCam's two point method to obtain a lower bound. Consider a $\Delta > 0$. The particular value of $\Delta$ will be determined later in the analysis. We define the following two functions on the interval $[0, \Delta]$.
\begin{align*}
    f_1(x) & = \begin{cases} 1 + Lx & \text{ if } 0 \leq x \leq \Delta/2 \\ 1 + L(\Delta - x) & \text{ if } \Delta/2 \leq x \leq \Delta  \end{cases} \\
    f_{-1}(x) & = \begin{cases} 1 - Lx & \text{ if } 0 \leq x \leq \Delta/2 \\ 1 - L(\Delta - x) & \text{ if } \Delta/2 \leq x \leq \Delta  \end{cases}.
\end{align*}
The value of $\Delta$ is small enough to ensure both these functions are non-negative. It is evident from the construction that both $f_1$ and $f_{-1}$ are $L$-Lipschitz functions. Additionally note that
\begin{align*}
    \int_0^{\Delta} |f_1(x) - 1| \ dx & = \int_0^{\Delta} |f_{-1}(x) - 1| \ dx  \\
    & = \int_0^{\Delta/2} Lx \ dx +  \int_{\Delta/2}^{\Delta} L(\Delta - x) \ dx  \\
    & = 2\int_0^{\Delta/2} Lx \ dx  \\
    & = \frac{L \Delta^2}{4}.
\end{align*}

Let $M = \Delta^{-1}$, for simplicity is assumed to be an even integer and $\cZ$ denote the set of $2^{M/2}$ binary strings of the form $\{ \pm 1 \}^{M/2}$. We define a class of distributions, $\cQ$, containing $2^{M/2}$ distributions and is defined using a bijection from the set $\cZ$. In particular, the density $f_q$ distribution $q \in \cQ$ corresponding to $z = (z_0, z_1, \dots, z_{M/2-1}) \in \cZ$ is defined as a piecewise function for $i = 0,1,2, \dots, M/2-1$ as follows
\begin{align*}
    f_q(x) = \begin{cases} f_{z_i}(x) & \text{ for } x \in [2i\Delta, (2i+1)\Delta] \\ f_{-z_i}(x) & \text{ for } x \in [(2i +1)\Delta, 2(i+1)\Delta] \end{cases}
\end{align*}
The $\ell_1$ distance of any distribution $q \in \cQ$ from the uniform distribution is given as
\begin{align*}
    \int_{0}^1 |f_q(x) - 1| \ dx & = \sum_{i = 0}^{M/2-1} \int_{2i\Delta}^{(2i+1)\Delta} |f_{z_i} (x) - 1| \ dx +  \int_{(2i+1)\Delta}^{2(i+1)\Delta} |f_{-z_i} (x) - 1| \ dx  \\
    & = \sum_{i = 0}^{M-1} \frac{L \Delta^2}{4} \\
    & = \frac{L \Delta}{4}.
\end{align*}
Thus, the $\ell_1$ distance to uniform distribution is the same for all the distributions in $\cQ$, equal to $L \Delta/4$. \\

Suppose there exists an algorithm such that using $n$ i.i.d. samples, it can distinguish between the uniform distribution and a distribution chosen uniformly at random from $\cQ$ with probability at least $9/10$ . Then, from LeCam's two point method~\citep{LeCam1986} we know that $\| \nu_{\pi_1}^n - \nu_{\pi_2}^n \|_{TV} \geq 0.8$ and consequently $\chi^2(\nu_{\pi_1}^n, \nu_{\pi_2}^n) > 1.28 $. Here $\| p - q \|_{TV}$ is the total variation distance between the distributions $p$ and $q$ and $\chi^2(p, q)$ is the $\chi^2$-distance between those distribution. Lastly, $\nu_{\pi_1}^n$ and $\nu_{\pi_2}^n$ are distributions given as follows. $\nu_{\pi_1}^n$ is the joint distribution of $n$ samples generated i.i.d. from a uniform distribution. On the other hand, $\nu_{\pi_2}^n$ is the mixture distribution obtained by first choosing a distribution from $\cQ$ uniformly at random and then taking $n$ i.i.d. samples from it. Mathematically, $\nu_{\pi_2}^n$ is given as 
\begin{align*}
    \nu_{\pi_2}^n = \frac{1}{2^{M/2}} \sum_{q \in \cQ} q(x^n).
\end{align*}

To simplify the analysis, we use the standard Poissonization technique wherein instead of obtaining $n$ samples directly, we first sample $N$ from a Poisson distribution with mean $n$ and then $N$ samples from the underlying distribution. Our objective is to obtain an upper bounds on the $\chi^2$-distance between $\nu_{\pi_1}^N$ and $\nu_{\pi_2}^N$ where the $N$ is used in the notation to denote the Poissonization sampling technique and taking the expectation over the Poisson random variable. Using the properties of the Poisson random variable, we have the following relation: $\chi^2(\nu_{\pi_1}^N, \nu_{\pi_2}^N) \geq 0.4 \chi^2(\nu_{\pi_1}^n, \nu_{\pi_2}^n) $. Thus, it is sufficient to bound $\chi^2(\nu_{\pi_1}^N, \nu_{\pi_2}^N)$ as that would directly give us an upper bound on $\chi^2(\nu_{\pi_1}^n, \nu_{\pi_2}^n)$. \\

Let us focus on the distribution of points under $\nu_{\pi_2}$ in the first two intervals, $[0, \Delta]$ and $[\Delta, 2\Delta]$. The density in both these interval is just determined by value of $z_0$. Let $\mathbf{x}^0$ denote the set of samples in $[0, \Delta]$ and $\mathbf{y}^0$ denote the set of samples in $[\Delta, 2\Delta]$. We denote the size of any such set $\mathbf{x}$ as $|\mathbf{x}|$. We first evaluate the conditional density when $z_0 = 1$. We have,
\begin{align*}
    & \nu_{\pi_2}^N \left( \mathbf{x} = (x_1^0, x_2^0, \dots, x_{n_1}^0), \mathbf{y} = (y_1^0, y_2^0, \dots, y_{n_2}^0), |\mathbf{x}^0| = n_1, |\mathbf{y}^0| = n_2  \ \bigg| \ z_0 = 1\right) \\
    & = \frac{e^{-n p_{+1}}}{n_1!}(n p_{+1})^{n_1} \left(\prod_{i = 1}^{n_1}\frac{f_{1}(x_i^0)}{p_{+1}} \right) \frac{e^{-n p_{-1}}}{n_2!}(n p_{-1})^{n_2} \left(\prod_{i = 1}^{n_2}\frac{f_{-1}(y_i^0)}{p_{-1}} \right) \\
    & = \frac{e^{-n (p_{+1} + p_{-1})}}{n_1! n_2!}n^{n_1 + n_2} \left(\prod_{i = 1}^{n_1} f_{1}(x_i^0) \right)  \left(\prod_{i = 1}^{n_2}f_{-1}(y_i^0) \right).
\end{align*}
In the above expression, the terms $p_{+1}$ and $p_{-1}$ are given by
\begin{align*}
    p_{+1} & = \int_0^{\Delta} f_1(x) \ dx = \Delta + \frac{L\Delta^2}{2} \\
    p_{-1} & = \int_0^{\Delta} f_{-1}(x) \ dx = \Delta - \frac{L\Delta^2}{2} \\
\end{align*}
We can similarly evaluate the conditional density for the case when $z_0 = -1$ and combine it with the above result to obtain the density for $\nu_{\pi_2}^N$ as follows.
\begin{align*}
    & \nu_{\pi_2}^N \left( \mathbf{x}^0 = (x_1^0, x_2^0, \dots, x_{n_1}^0), \mathbf{y}^0 = (y_1^0, y_2^0, \dots, y_{n_2}^0), |\mathbf{x}^0| = n_1, |\mathbf{y}^0| = n_2 \right) \\
    & = \frac{e^{-2n\Delta}}{2n_1! n_2!}n^{n_1 + n_2}  \left[\left(\prod_{i = 1}^{n_1} f_{1}(x_i^0) \right)  \left(\prod_{i = 1}^{n_2}f_{-1}(y_i^0) \right) + \left(\prod_{i = 1}^{n_1} f_{-1}(x_i^0) \right)  \left(\prod_{i = 1}^{n_2}f_{1}(y_i^0) \right) \right] .
\end{align*}
Since $\nu_{\pi_1}^N$ is effectively the uniform distribution, the corresponding expression for $\nu_{pi_1}^N$ is given by
\begin{align*}
    & \nu_{\pi_1}^N \left( \mathbf{x}^0 = (x_1^0, x_2^0, \dots, x_{n_1}^0), \mathbf{y}^0 = (y_1^0, y_2^0, \dots, y_{n_2}^0), |\mathbf{x}^0| = n_1, |\mathbf{y}^0| = n_2 \right) \\
    & = \frac{e^{-2n\Delta}}{n_1! n_2!}n^{n_1 + n_2}.
\end{align*}

We can extend this expression to any interval. In particular, the set of samples obtained in the interval $[2i \Delta, (2i + 1)\Delta]$ is denoted by $\mathbf{x}^i$ with $|\mathbf{x}^i| = n_{2i+1}$ and similarly the set of samples obtained in the interval $[(2i + 1) \Delta, 2(i + 1)\Delta]$ is denoted by $\mathbf{y}^i$ with $|\mathbf{y}^i| = n_{2i+2}$ for $i = 0,1, \dots M/2 -1$. Then for any such collection of $\mathbf{x}^i$'s and $\mathbf{y}^i$'s, we have, 
\begin{align*}
    & \nu_{\pi_2}^N \left( \mathbf{x}^0, \mathbf{x}^1, \dots, \mathbf{x}^{M/2 -1}, \mathbf{y}^0, \mathbf{y}^1, \dots, \mathbf{y}^{M/2 -1}  \right) \\
    & = \prod_{j = 0}^{M/2 - 1} \left\{ \frac{e^{-2n\Delta}}{2n_{2j+1}! n_{2j+2}!}n^{n_{2j+1} + n_{2j+2}}  \left[\left(\prod_{i = 1}^{n_{2j+1}} f_{1}(x_i^j) \right)  \left(\prod_{i = 1}^{n_{2j+2}}f_{-1}(y_i^j) \right) + \left(\prod_{i = 1}^{n_{2j+1}} f_{-1}(x_i^j) \right)  \left(\prod_{i = 1}^{n_{2j+2}}f_{1}(y_i^j) \right) \right] \right\}.
\end{align*}
Similarly, we have,
\begin{align*}
    & \nu_{\pi_1}^N \left( \mathbf{x}^0, \mathbf{x}^1, \dots, \mathbf{x}^{M/2 -1}, \mathbf{y}^0, \mathbf{y}^1, \dots, \mathbf{y}^{M/2 -1}  \right) \\
    & = \prod_{j = 0}^{M/2 - 1} \left\{ \frac{e^{-2n\Delta}}{n_{2j+1}! n_{2j+2}!}n^{n_{2j+1} + n_{2j+2}} \right\}.
\end{align*}

The $\chi^2$ distance between $\nu_{\pi_1}^N$ and $\nu_{\pi_2}^N$ is given as,
\begin{align*}
    \chi^2(\nu_{\pi_2}^N, \nu_{\pi_1}^N) & = \E_{\nu_{\pi_2}^N} \left[\frac{\nu_{\pi_2}^N}{\nu_{\pi_1}^N} \right]  - 1 \\
    & = \E_{\nu_{\pi_2}^N}\left[ g(\mathbf{x}^0, \dots,  \mathbf{x}^{M/2 -1}, \mathbf{y}^0, \dots, \mathbf{y}^{M/2 -1}) \right]    - 1 ,
\end{align*}
where
\begin{align*}
    g(\mathbf{x}^0, \dots, \mathbf{y}^{M/2 -1}) = \prod_{j = 0}^{M/2 - 1} \frac{1}{2}\left[\left(\prod_{i = 1}^{n_{2j+1}} f_{1}(x_i^j) \right)  \left(\prod_{i = 1}^{n_{2j+2}}f_{-1}(y_i^j) \right) + \left(\prod_{i = 1}^{n_{2j+1}} f_{-1}(x_i^j) \right)  \left(\prod_{i = 1}^{n_{2j+2}}f_{1}(y_i^j) \right) \right].
\end{align*}
The density function depends on vector $z$. Owing to symmetry and independence, we just focus on evaluating one term in the product, in particular $j=0$ (corresponding to $z_0$) as all the terms will be equal in expectation. We first evaluate the conditional expectation with respect to underlying distribution $q$ with $z$ and $N$ fixed, followed by taking the expectation over $z$ and $N$ as well. \\

We begin with evaluating the conditional expectations over $z_0 = 1$ which we denote by $E_1$. We have,
\begin{align*}
    & E_1 = \E \left[\frac{1}{2} \left\{\left(\prod_{i = 1}^{n_1} f_{1}(x_i^0) \right)  \left(\prod_{i = 1}^{n_2}f_{-1}(y_i^0) \right) + \left(\prod_{i = 1}^{n_1} f_{-1}(x_i^0) \right)  \left(\prod_{i = 1}^{n_2}f_{1}(y_i^0) \right) \right\} \bigg| z_0 = 1\right]  \\
    & = \int   \left[\frac{1}{2} \left\{ \left(\prod_{i = 1}^{n_1} f_{1}(x_i^0) \right)  \left(\prod_{i = 1}^{n_2}f_{-1}(y_i^0) \right) + \left(\prod_{i = 1}^{n_1} f_{-1}(x_i^0) \right)  \left(\prod_{i = 1}^{n_2}f_{1}(y_i^0) \right)  \right\} \right] \left(\prod_{i = 1}^{n_1} \frac{f_{1}(x_i^0)}{p_{+1}} \right)  \left(\prod_{i = 1}^{n_2}\frac{f_{-1}(y_i^0)}{p_{-1}} \right) d \mathbf{x}^0 d \mathbf{y}^0.
\end{align*}
The above expression can be simplified using the following integrals.
\begin{align*}
    \int_0^{\Delta} f_1^2(x) dx & = \int_0^{\Delta/2} (1 + Lx)^2 dx + \int_0^{\Delta/2} (1 + L(\Delta - x))^2 dx \\
    & = 2\int_0^{\Delta/2} (1 + Lx)^2 dx  \\
    & = \frac{2}{3L} \left[ \left(1 + \frac{L\Delta}{2} \right)^3  - 1 \right].
\end{align*}
Similarly, 
\begin{align*}
    \int_0^{\Delta} f_{-1}^2(x) dx & = \int_0^{\Delta/2} (1 - Lx)^2 dx + \int_0^{\Delta/2} (1 - L(\Delta - x))^2 dx \\
    & = 2\int_0^{\Delta/2} (1 - Lx)^2 dx  \\
    & = \frac{2}{3L} \left[1 -  \left(1 - \frac{L\Delta}{2} \right)^3 \right].
\end{align*}
Lastly, 
\begin{align*}
    \int_0^{\Delta} f_{1}(x)f_{-1}(x) dx & = \int_0^{\Delta/2} [1 - (Lx)^2] dx + \int_0^{\Delta/2} [1 - L^2(\Delta - x)^2 ] dx \\
    & = 2\int_0^{\Delta/2} [1 - (Lx)^2] dx  \\
    & =\left[\Delta -  \frac{L^2 \Delta^3}{12} \right].
\end{align*}
On plugging these values back into the above expression, we obtain,
\begin{align*}
    & E_1 =  \E \left[ \frac{1}{2}  \left\{\left(\prod_{i = 1}^{n_1} f_{1}(x_i^0) \right)  \left(\prod_{i = 1}^{n_2}f_{-1}(y_i^0) \right) + \left(\prod_{i = 1}^{n_1} f_{-1}(x_i^0) \right)  \left(\prod_{i = 1}^{n_2}f_{1}(y_i^0) \right) \right\} \bigg| z_0 = 1 \right] \\
    & = \frac{1}{2} \bigg[\left\{ \frac{2}{3Lp_{+1}} \left[ \left(1 + \frac{L\Delta}{2} \right)^3  - 1 \right] \right\}^{n_1}\left\{ \frac{2}{3Lp_{-1}} \left[1 -  \left(1 - \frac{L\Delta}{2} \right)^3 \right] \right\}^{n_2} + \\
    & \ \ \ \ \ \ \ \ \ \ \ \ \ \ \ \  \left\{ \frac{1}{p_{+1}} \left[\Delta -  \frac{L^2 \Delta^3}{12} \right] \right\}^{n_1}\left\{ \frac{1}{p_{-1}} \left[\Delta -  \frac{L^2 \Delta^3}{12} \right] \right\}^{n_2} \bigg].
\end{align*}
A similar analysis yields the following relation.
\begin{align*}
    & E_{-1} = \E \left[ \frac{1}{2}  \left\{\left(\prod_{i = 1}^{n_1} f_{1}(x_i^0) \right)  \left(\prod_{i = 1}^{n_2}f_{-1}(y_i^0) \right) + \left(\prod_{i = 1}^{n_1} f_{-1}(x_i^0) \right)  \left(\prod_{i = 1}^{n_2}f_{1}(y_i^0) \right) \right\} \bigg| z_0 = -1 \right] \\
    & = \frac{1}{2} \bigg[\left\{ \frac{2}{3Lp_{+1}} \left[ \left(1 + \frac{L\Delta}{2} \right)^3  - 1 \right] \right\}^{n_2}\left\{ \frac{2}{3Lp_{-1}} \left[1 -  \left(1 - \frac{L\Delta}{2} \right)^3 \right] \right\}^{n_1} + \\
    & \ \ \ \ \ \ \ \ \ \ \ \ \ \ \ \  \left\{ \frac{1}{p_{+1}} \left[\Delta -  \frac{L^2 \Delta^3}{12} \right] \right\}^{n_2}\left\{ \frac{1}{p_{-1}} \left[\Delta -  \frac{L^2 \Delta^3}{12} \right] \right\}^{n_1} \bigg].
\end{align*}

Using these results, we have, 

\begin{align*}
    \chi^2(\nu_{\pi_2}^N, \nu_{\pi_1}^N) & = \E_{\nu_{\pi_2}^N}\left[ g(\mathbf{x}^0, \dots,  \mathbf{x}^{M/2 -1}, \mathbf{y}^0, \dots, \mathbf{y}^{M/2 -1}) \right]    - 1  \\
    & = \left[\frac{1}{2} \left( \E_{z_0 = 1, N} [E_1] + \E_{z_0 = -1, N} [E_{-1}] \right)\right]^{M/2} - 1
\end{align*}

We focus on the first term where $N_1$ and $N_2$ are independent Poisson random variables with means $n p_{+1}$ and $n p_{-1}$ respectively. Using the expression for moment generating function of a Poisson random variable, we have,
\begin{align*}
    & \E_{z_0 =1, N}  \\
    & = \frac{1}{2}\E_{N} \Bigg[ \left\{ \frac{2}{3Lp_{+1}} \left[ \left(1 + \frac{L\Delta}{2} \right)^3  - 1 \right] \right\}^{N_1}\left\{ \frac{2}{3Lp_{-1}} \left[1 -  \left(1 - \frac{L\Delta}{2} \right)^3 \right] \right\}^{N_2} + \\
    & \ \ \ \ \ \ \  \left\{ \frac{1}{p_{+1}} \left[\Delta -  \frac{L^2 \Delta^3}{12} \right] \right\}^{N_1}\left\{ \frac{1}{p_{-1}} \left[\Delta -  \frac{L^2 \Delta^3}{12} \right] \right\}^{N_2} \Bigg] \\
    & = \frac{1}{2}\exp \Bigg( n p_{+1}\left\{ \frac{2}{3Lp_{+1}} \left[ \left(1 + \frac{L\Delta}{2} \right)^3  - 1 \right] - 1 \right\} + n p_{-1}\left\{ \frac{2}{3Lp_{-1}} \left[1 -  \left(1 - \frac{L\Delta}{2} \right)^3 \right] - 1 \right\} \Bigg) + \\
    & \ \ \ \ \ \ \ \frac{1}{2} \exp \left( n p_{+1} \left\{ \frac{1}{p_{+1}} \left[\Delta -  \frac{L^2 \Delta^3}{12} \right]  -1 \right\} + n p_{-1}\left\{ \frac{1}{p_{-1}} \left[\Delta -  \frac{L^2 \Delta^3}{12} \right] - 1\right\} \right) \\
    & = \frac{1}{2} \left[\exp \left( \frac{nL^2 \Delta^3}{6}\right) + \exp \left( -\frac{nL^2 \Delta^3}{6}\right) \right]\\
    & \leq \exp \left( \frac{n^2L^4 \Delta^6}{72}\right).
\end{align*}

Using a very similar analysis, we find that the other term, $\E_{z_0 = -1, N} [E_{-1}]$, can also be upper bounded by the same expression. Thus, we have,

\begin{align*}
    \chi^2(\nu_{\pi_2}^N, \nu_{\pi_1}^N) & = \left[\frac{1}{2} \left( \E_{z_0 = 1, N} [E_1] + \E_{z_0 = -1, N} [E_{-1}] \right)\right]^{M/2} - 1 \\
    & \leq \exp \left( \frac{n^2L^4 \Delta^6 M}{72}\right)  - 1 \\
    & \leq \exp \left( \frac{n^2L^4 \Delta^5}{72}\right)  - 1.
\end{align*}

In order to ensure the lower bound on $\chi^2$ distance required for any viable tester, we need to ensure that this upper bounds is at least as large as the lower bounds. Consequently, using the relation between and $\chi^2(\nu_{\pi_1}^n, \nu_{\pi_2}^n)$ and $\chi^2(\nu_{\pi_1}^N, \nu_{\pi_2}^N)$ along with the above relation, we have, 
\begin{align*}
    \exp \left( \frac{n^2L^4 \Delta^5}{72}\right)  - 1 \geq 0.512 \implies n \geq \frac{c}{L^2 \Delta^{2.5}},
\end{align*}
for some universal constant $c$. If we set $\Delta = 4(1 +\eta)\varepsilon/L$, for $\eta > 0$ then the $\ell_1$ distance of any $q \in \cQ$ would be $(1 + \eta) \varepsilon$ implying $\cQ \subset \cC_L(\varepsilon)$. The lower bound follows by plugging this value of $\Delta$.

\end{document}